\newcommand{\e}{\mathbb{E}}
\newcommand{\p}{\mathbb{P}}
\newcommand{\boh}{{\hat{\gamma}_T}}
\DeclareMathOperator{\vect}{vec}
\newcommand{\bh}{{H}}
\newcommand{\Bh}{{B}}
\newcommand{\Ch}{{C}}
\newcommand{\Dh}{{D}}
\newtheorem{defi}{Definition}[section]
\newtheorem{lemma}[defi]{Lemma}
\newtheorem{theorem}[defi]{Theorem}
\newtheorem{kor}[defi]{Corollary}
\newtheorem{rem}[defi]{Remark}
\newtheorem{ex}[defi]{Example}
\begin{document}

\title{Modeling and estimation of multivariate discrete and continuous time stationary processes}

\renewcommand{\thefootnote}{\fnsymbol{footnote}}

\author{Marko Voutilainen}

\maketitle

\begin{abstract}
In this paper, we give a AR$(1)$ type of characterization covering all multivariate strictly stationary processes indexed by the set of integers. Consequently, we derive continuous time algebraic Riccati equations for the parameter matrix of the characterization providing us with a natural way to define the corresponding estimator under the assumption of square integrability. In addition, we show that the estimator inherits consistency from autocovariances of the stationary process and furthermore, the limiting distribution is given by a linear function of the limiting distribution of the autocovariances. We also present the corresponding existing results of the continuous time setting paralleling them to the discrete case.
\end{abstract}

{\small
\medskip

\noindent
\textbf{AMS 2010 Mathematics Subject Classification:} 60G10, 62M10, 62H12, 62G05

\medskip

\noindent
\textbf{Keywords:}
time-series analysis, stationary processes, characterization, multivariate Ornstein-Uhlenbeck processes, generalized Langevin equation, AR(1) representation, algebraic Riccati equations, estimation, consistency
}



\section{Introduction}

Stationary stochastic processes provide a significant instrument for modeling numerous temporal phenomena related to different fields of science. In particular, due to the evidence of long dependence structures in the real financial data, stationary processes possessing long-memory have been widely applied in mathematical finance. 

When discrete time is considered, stationary data is typically modeled by applying ARMA processes or their extensions. One focal reason for popularity of ARMA processes is that for every stationary process with a vanishing autocovariance $\gamma(\cdot)$ and for every $n\in\mathbb{N}$ there exists an ARMA process $X$ such that $\gamma_X(k) = \gamma(k)$ for $|k| \leq n$. For a comprehensive overview of ARMA processes we mention \cite{brockwell1991time}, \cite{hamilton1994time} and \cite{neusser2016time}. The immense ARMA family include for example SARIMA models, where a seasonal ARMA process is obtained by differencing the original data, and different GARCH models originating from \cite{engle1982autoregressive} and \cite{bollerslev1986generalized} that are commonly used in financial modeling taking account of the time-dependent volatility. ARMA processes, their extensions and estimation have been concerned e.g. in \cite{hannan1973asymptotic}, \cite{tiao1983consistency}, \cite{mikosch1995parameter}, \cite{mauricio1995exact}, \cite{ling2003asymptotic}, \cite{francq2004maximum}, \cite{bollerslev2008glossary}, \cite{han2014asymptotic}, \cite{baillie1996fractionally}, \cite{ling1997fractionally} and \cite{zhang2001nonlinear}, to mention but a few. Moreover, in \cite{voutilainen2017model} we showed that all univariate strictly stationary processes indexed by the integers are characterized by the AR$(1)$ equation
\begin{equation*}
X_t - \phi X_{t-1} = Z_t,\quad t\in\mathbb{Z},
\end{equation*}
where the noise $Z$ belonging to a certain class of stationary process is not necessarily white. Established on the characterization, we proposed an estimation method for $\phi$ in the case of a square integrable stationary process that has several advantages over the conventional methods such as maximum likelihood and least squares fitting of ARMA models. Furthermore, in \cite{arch} we applied our method in estimation of a generalized ARCH model involving a covariate process that can be interpreted as the liquidity of an asset.

In the case of continuous time, the Ornstein-Uhlenbeck process $X$ given by the Langevin equation
\begin{equation}
\label{introlangevin}
dX_t = -\theta X_t dt + dB_t, \quad t\in\mathbb{Z},
\end{equation}
where $\theta >0$ and $B$ is a two-sided Brownian motion, can be seen as the analogue of the discrete time AR$(1)$ process. By posing a suitable initial condition, \eqref{introlangevin} yields a stationary solution. The foregoing can be generalized, for example, by replacing Brownian motion with other stationary increment processes satisfying certain integrability conditions, such as fractional Brownian motion recovering the fractional Ornstein-Uhlenbeck process introduced in \cite{Cheridito-Kawaguchi-Maejima-2003}. This kind of generalized Ornstein-Uhlenbeck processes are applied e.g. in mathematical finance to describe mean-reverting systems under the influence of shocks, and they are a highly active topic of research. Equations of type \eqref{introlangevin} with varying driving forces, and estimation in such models have been concerned e.g. in \cite{Hu-Nualart-2010}, \cite{Kleptsyna-LeBreton-2002}, \cite{Azmoodeh-Viitasaari-2015a}, \cite{bajja2017least}, \cite{balde2018ergodicity}, \cite{Brouste-Iacus-2013}, \cite{Ciprian-Hermite}, \cite{hu2019parameter}, \cite{nourdin2019statistical}, \cite{Sottinen-Viitasaari-2017a}, \cite{tanaka2015maximum}, \cite{applebaum2015infinite} and \cite{magdziarz2008fractional}, to mention but a few.
Furthermore, in \cite{voutilainen2019vector} we showed that a generalized multidimensional version of \eqref{introlangevin} characterizes all multivariate strictly stationary processes with continuous paths. Consequently, we proposed an estimation method for the parameter matrix of \eqref{introlangevin} under the assumption of square integrability. The method is based on continuous time algebraic Riccati equations (CAREs) written in terms of the autocovariance function of the stationary solution. Algebraic Riccati equations have been studied intensively in the literature and they occur naturally e.g. in optimal control and filtering theory. Real-valued CAREs often take the symmetric form
\begin{equation}
\label{introriccati}
B^\top A + A B - A C A + D = 0,
\end{equation}
where $C$ and $D$ are symmetric, and symmetric solutions $A$ are to be found. For a general approach to algebraic Riccati equations the reader may consult for example \cite{lancaster1995algebraic}. The existence and uniqueness of a solution to \eqref{introriccati} is a well-studied topic, especially when $C$ and $D$ are positive semidefinite (see e.g. \cite{kucera1972contribution}, \cite{wonham1968matrix} or \cite{sun1998perturbation}).

The rest of the paper is organized as follows. In Subsection \ref{sec:maindisc} we complete our previous investigations of stationary processes by treating the multivariate discrete time case. First, we show that the characterization is now given by a multidimensional AR$(1)$ type of equation. Then, by taking a similar approach as in \cite{voutilainen2019vector} we obtain a set of symmetric CAREs that serve as a basis for estimation of the model parameter matrix. Finally, we state theorems for consistency and asymptotic distribution of the estimator. In Subsection \ref{sec:maincont} we present the main results of \cite{voutilainen2019vector} while at the same time comparing them to the results obtained in discrete time. The proofs are postponed to Section \ref{sec:proofs}.

\section{Main results}
\label{sec:main}
The considered processes are $n$-dimensional, real-valued and indexed by $I \in \{\mathbb{Z}, \mathbb{R}\}$. We use the notation $Y = (Y_t)_{t\in I}$, where the $i$th component of the random vector $Y_t$ is denoted by $Y_t^{(i)}$. Equality of the distributions of two random vectors $Y_t$ and $Z_t$ is denoted by $Y_t \overset{\text{law}}{=} Z_t$. Similarly, equality of two processes $Y$ and $Z$ in the sense of finite dimensional distributions is denoted by $Y = (Y_t)_{t\in I} \overset{\text{law}}{=} (Z_t)_{t\in I} =  Z$. Throughout the paper, we investigate strictly stationary processes meaning that $(X_{t+s})_{t\in I}  \overset{\text{law}}{=} (X_t)_{t\in I}$ for every $s \in I$. Consequently, we omit the word 'strictly' and simply say that $X$ is stationary. By writing $A \geq 0$ or $A >0$ we mean that the matrix $A$ is positive semidefinite or positive definite, respectively. We denote an eigendecomposition of a symmetric matrix by $A = Q\Lambda Q^\top$, where $\Lambda= \mathrm{diag}(\lambda_i)$. Furthermore, the $L^2$ vector norm and the corresponding induced matrix norm is denoted by $\Vert \cdot \Vert$.

By applying the models of stationary processes, which we introduce in this paper, we consequently obtain symmetric CAREs of the form
\begin{equation}
\label{riccati2}
B^\top A + A B - A C A + D = 0,
\end{equation}
where $C, D \geq 0$, and we are solving the equation for a positive definite $A$. There exists a vast amount of literature on existence and uniqueness of a solution (see e.g. \cite{kucera1972contribution} or \cite{wonham1968matrix}) in the described setting. In particular, if $C, D >0$, then there exists a unique positive semidefinite solution to \eqref{riccati2}. Furthermore, there exists several numerical methods for finding the positive semidefinite solution of \eqref{riccati2} (see e.g. \cite{byers1987solving}, \cite{laub1979schur} or monograph \cite{bini2012numerical}).

\subsection{Discrete time}
\label{sec:maindisc}
In this subsection we extend the characterization of stationary processes of \cite{voutilainen2017model} to multivariate settings. Consequently, we derive quadratic equations for the corresponding model parameter matrix providing us with a natural way to define an estimator for the parameter. Finally, we pose theorems for consistency and asymptotic distribution. A strong analogue with the continuous time case $I = \mathbb{R}$ covered in \cite{voutilainen2019vector} is obtained. We start by providing some definitions.

\begin{defi}
\label{defi:incrementprocessdisc}
Let $G = (G_t)_{t\in\mathbb{Z}}$ be a $n$-dimensional stationary increment process. We define a stationary process $\Delta G = (\Delta_t G)_{t\in\mathbb{Z}}$ by
\begin{equation*}
\Delta_t G = G_t - G_{t-1}.
\end{equation*} 
\end{defi}

As in the univariate case, we define a class of stationary increment process having sub-exponentially deviating sample paths.
\begin{defi}
\label{defi:GH}
Let $H >0$ and let $G = (G_t)_{t\in\mathbb{Z}}$ be a $n$-dimensional stochastic process with stationary increments and $G_0 = 0$. If 
\begin{equation*}
\lim_{l \to -\infty} \sum_{k=l}^ 0 e^{kH} \Delta_k G
\end{equation*}
exists in probability and defines an almost surely finite random variable, we denote $G\in\mathcal{G}_H$.
\end{defi}

\begin{rem}
Lemma \ref{lemma:logarithmic} shows that existence of a logarithmic moment is sufficient for $G \in \mathcal{G}_H$ for all $H >0$. Particularly, this is the case if $G$ is square integrable. On the other hand, an example of a stationary increment process $G$ with $G_0 = 0$, but $G \notin \mathcal{G}_H$ for any $H>0$ was provided in \cite{Viitasaari-2016a}.
\end{rem}
The next theorem characterizes all multivariate stationary processes, including processes possessing long-memory.

\begin{theorem}
\label{theorem:chardisc}
Let $H > 0$ be fixed and let $X = (X_t)_{t\in\mathbb{Z}}$ be a $n$-dimensional stochastic process. Then $X$ is stationary if and only if $\lim_{t\to -\infty} e^{tH}X_t \overset{\p}{=} 0$ and
\begin{equation}
\label{characterizationdisc}
\Delta_t X = (e^{-H} -I)X_{t-1} + \Delta_t G
\end{equation}
for $G\in \mathcal{G}_H$ and $t\in\mathbb{Z}$. Moreover, the process $G\in \mathcal{G}_H$ is unique.
\end{theorem}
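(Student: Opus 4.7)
The plan is to rewrite \eqref{characterizationdisc} in the equivalent AR$(1)$ form $X_t = e^{-H}X_{t-1}+\Delta_t G$ and then prove the two implications separately, exploiting the positive definiteness of $H$ and the stationary-increments structure of $G$.

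For the sufficiency direction, assuming $G\in\mathcal{G}_H$, the initial condition, and the AR$(1)$ relation, I would iterate the recursion $m$ times to obtain
\[
X_t = e^{-mH}X_{t-m} + \sum_{k=0}^{m-1} e^{-kH}\Delta_{t-k}G,
\]
and let $m\to\infty$. Since $H$ is positive definite, $\Vert e^{sH}\Vert \to 0$ as $s\to -\infty$, so writing $e^{-mH}X_{t-m} = e^{-tH}\,e^{(t-m)H}X_{t-m}$ shows that the first term vanishes in probability by the initial condition. For the second term, the partial sum is, after the reindexing $k \mapsto -k$, the series of Definition \ref{defi:GH} with the increments of $G$ shifted by $t$; stationarity of $\Delta G$ lets me conclude that this shifted series inherits the same Cauchy-in-probability behaviour as the unshifted one, so it converges in probability to
\[
X_t = \sum_{k=0}^{\infty} e^{-kH}\Delta_{t-k}G.
\]
This exhibits $X_t$ as a fixed measurable functional of $(\Delta_{t-k}G)_{k\ge 0}$, and stationarity of $\Delta G$ then yields stationarity of $X$.

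For necessity, given stationary $X$, I would define $G$ by $G_0 = 0$ and $\Delta_t G := X_t - e^{-H}X_{t-1}$, so that \eqref{characterizationdisc} holds by construction. The process $\Delta G$ is stationary as a pointwise function of $(X_{t-1},X_t)$, hence $G$ has stationary increments. The decisive step is the telescoping identity
\[
\sum_{k=l}^0 e^{kH}\Delta_k G = \sum_{k=l}^0\bigl(e^{kH}X_k - e^{(k-1)H}X_{k-1}\bigr) = X_0 - e^{(l-1)H}X_{l-1}.
\]
Because $X_{l-1}$ has the fixed distribution of $X_0$ by stationarity and $\Vert e^{(l-1)H}\Vert \to 0$, the right-hand side converges in probability to $X_0$ as $l\to -\infty$, proving simultaneously that $G\in\mathcal{G}_H$ and that $\lim_{t\to -\infty} e^{tH}X_t \overset{\p}{=} 0$. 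Uniqueness is then immediate: $\Delta_t G$ is forced by $X$ through the equation, and the normalization $G_0=0$ from Definition \ref{defi:GH} fixes $G$ completely.

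The main technical obstacle I anticipate is the transfer of the Definition \ref{defi:GH} convergence to its time-shifted version in the sufficiency direction. This is not a pathwise statement; rather, I would argue that $\sum_{k=l}^0 e^{kH}\Delta_{k+t}G$ and $\sum_{k=l}^0 e^{kH}\Delta_k G$ share the same finite-dimensional distributions by stationarity of $\Delta G$, so the Cauchy property in probability is preserved and completeness of $L^0$ in the metric of convergence in probability delivers the limit.
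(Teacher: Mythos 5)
Your proposal is correct, and the sufficiency direction, the stationarity argument via finite truncations, and the uniqueness argument all match the paper's proof essentially step for step. The genuine difference is in the necessity direction: you define $G$ directly through $\Delta_t G := X_t - e^{-H}X_{t-1}$, $G_0=0$, and verify $G\in\mathcal{G}_H$ by the telescoping identity $\sum_{k=l}^0 e^{kH}\Delta_k G = X_0 - e^{(l-1)H}X_{l-1}$ together with tightness of $\Vert X_{l-1}\Vert$ and $\Vert e^{(l-1)H}\Vert = e^{\lambda_{\min}(l-1)}\to 0$. The paper instead routes this half through a discrete-time multivariate Lamperti correspondence (Theorem \ref{theorem:lamperti}): it builds the $H$-self-similar process $Y_{e^t}=e^{tH}X_t$ and invokes Lemma \ref{lemma:YtoG}, whose proof is the same telescoping computation written as $\sum_{k=l}^0 \Delta_k Y_{e^k} = Y_1 - Y_{e^{l-1}}$, with self-similarity of $Y$ playing the role of your stationarity-of-$X$ argument. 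The two constructions produce the identical noise $G$; yours is leaner and self-contained, while the paper's detour buys the Lamperti bijection as a result of independent interest. You are also slightly more explicit than the paper in observing that the same telescoping limit delivers the initial condition $\lim_{t\to-\infty}e^{tH}X_t \overset{\p}{=} 0$, which the "only if" direction does require. One small remark on the technical obstacle you flag in the sufficiency step: the distributional transfer of the Cauchy-in-probability property is valid but unnecessary, since $\sum_{k=0}^{m-1}e^{-kH}\Delta_{t-k}G = e^{-tH}\sum_{j=t-m+1}^{t}e^{jH}\Delta_j G$ and the inner sum differs from the series in Definition \ref{defi:GH} only by the fixed finitely many terms between $1$ and $t$ (or $t+1$ and $0$), so its convergence is immediate from $G\in\mathcal{G}_H$ with no appeal to stationarity of $\Delta G$ or completeness of $L^0$; this is how the paper handles it.
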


\begin{kor}
\label{kor:ar1}
Let $H>0$ be fixed and let $X$ be stationary. Then $X$ admits an AR$(1)$ type of representation 
\begin{equation}
\label{ar1}
X_t - \Phi X_{t-1} = \Delta_t G,
\end{equation}
where $\Phi = e^{-H}$ and $G \in \mathcal{G}_H$.
\end{kor}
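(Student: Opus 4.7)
The plan is to invoke the forward implication of Theorem \ref{theorem:chardisc} directly and then perform a one-line algebraic rearrangement. Since $X$ is stationary, the theorem supplies a (unique) process $G\in\mathcal{G}_H$ for which the identity \eqref{characterizationdisc} holds, namely
\[
\Delta_t X = (e^{-H} - I) X_{t-1} + \Delta_t G, \qquad t\in\mathbb{Z}.
\]
All that remains is to rewrite this in the desired AR$(1)$ form.

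First I would expand $\Delta_t X = X_t - X_{t-1}$ on the left-hand side, so that \eqref{characterizationdisc} becomes
\[
X_t - X_{t-1} = (e^{-H} - I) X_{t-1} + \Delta_t G.
\]
Moving $-X_{t-1}$ to the right and combining with $(e^{-H}-I)X_{t-1}$, the $-I X_{t-1}$ contribution and the $-X_{t-1}$ cancel, leaving
\[
X_t - e^{-H} X_{t-1} = \Delta_t G.
\]
Setting $\Phi := e^{-H}$ yields precisely \eqref{ar1}, and the membership $G\in\mathcal{G}_H$ is inherited from Theorem \ref{theorem:chardisc} (as is, implicitly, the uniqueness of $G$).

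There is no genuine obstacle here: the corollary is a cosmetic reformulation of the characterization that isolates the parameter matrix $\Phi = e^{-H}$ and displays the multivariate AR$(1)$ structure that will drive the subsequent Riccati-equation analysis and estimator construction. The only point worth flagging explicitly is that the hypothesis $\lim_{t\to-\infty} e^{tH} X_t \overset{\p}{=} 0$ from Theorem \ref{theorem:chardisc} is automatically satisfied in this direction, so no additional assumption on $X$ beyond stationarity is needed.
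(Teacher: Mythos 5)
Your proof is correct and matches the paper's (implicit) argument exactly: the corollary is stated without a separate proof precisely because it is the forward direction of Theorem \ref{theorem:chardisc} plus the one-line rearrangement $X_t - X_{t-1} = (e^{-H}-I)X_{t-1} + \Delta_t G \iff X_t - e^{-H}X_{t-1} = \Delta_t G$. Your remark that the condition $\lim_{t\to-\infty} e^{tH}X_t \overset{\p}{=} 0$ is a consequence of stationarity in this direction (rather than an extra hypothesis) is also accurate.
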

By using \eqref{ar1} and the expression \eqref{mainfty} from the proof of Theorem \ref{theorem:chardisc}, it is straightforward to show that $\Delta G$ is centred and square integrable if and only if $X$ is centred and square integrable, respectively. In what follows, we assume these two attributes and write $\gamma(t) = \e X_t X_0^\top$ and $r(t) = \e (\Delta_t G)(\Delta_0 G)^\top$. Furthermore, since
\begin{equation*}
G_t = \sum_{k=1}^t \Delta_kG, \qquad t\geq 1, 
\end{equation*}
in this case also $G$ is centred and square integrable, and we denote $v(t) = \mathrm{cov}(G_t) =  \e G_t G_t^\top$. We would like to point out that centredness can be assumed without loss of generality (see Remark \ref{rem:centred}).

Under the assumptions, we obtain an expression for $\gamma(t)$ in terms of the noise process.
\begin{rem}
\label{rem:covariance}
The autocovariance function $\gamma(t)$ is given by
\begin{equation*}
\gamma(t) = e^{-tH} \sum_{k=-\infty}^t \sum_{j=-\infty}^0 e^{kH} r(k-j) e^{jH}.
\end{equation*}
Furthermore, if $G$ has independent components, we obtain
\begin{equation*}
\gamma(t) = \frac{e^{-tH}}{2} \sum_{k=-\infty}^t \sum_{j=-\infty}^0  e^{kH} \left( v(k-j+1) + v(k-j-1) - 2v(k-j)\right) e^{jH}.
\end{equation*}
\end{rem}
The following lemma writes the quadratic equations for the model parameter $\Phi = e^{-H}$ presented in \cite{voutilainen2017model} in our multivariate setting.

\begin{lemma}
\label{lemma:nonriccatidisc}
Let $H>0$ be fixed and let $X$ be stationary of the form \eqref{ar1}. Then 
\begin{equation}
\label{noncare}
r(t) = \Phi \gamma(t) \Phi - \gamma(t+1)\Phi - \Phi \gamma(t-1) +\gamma(t)
\end{equation}
for every $t\in\mathbb{Z}$.
\end{lemma}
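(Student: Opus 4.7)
The plan is a direct computation starting from the AR$(1)$ representation \eqref{ar1}. Since $X$ is of the form \eqref{ar1}, we have the identity
\begin{equation*}
\Delta_t G = X_t - \Phi X_{t-1}
\end{equation*}
for every $t \in \mathbb{Z}$. Substituting this into the definition $r(t) = \e(\Delta_t G)(\Delta_0 G)^\top$, I would expand the product under the expectation into four cross-terms and exchange expectation with the linear operations.

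Using the definition $\gamma(s) = \e X_s X_0^\top$ together with stationarity (so that $\e X_{t-1} X_{-1}^\top = \gamma(t)$, $\e X_t X_{-1}^\top = \gamma(t+1)$ and $\e X_{t-1} X_0^\top = \gamma(t-1)$), the expansion gives
\begin{equation*}
r(t) = \gamma(t) - \gamma(t+1)\Phi^\top - \Phi \gamma(t-1) + \Phi \gamma(t) \Phi^\top.
\end{equation*}
To match \eqref{noncare} I then invoke symmetry of $\Phi$. Because $H$ is assumed to be a symmetric positive definite matrix (i.e.\ $H > 0$), its eigendecomposition $H = Q\Lambda Q^\top$ yields $\Phi = e^{-H} = Q e^{-\Lambda} Q^\top$, which is symmetric. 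Hence $\Phi^\top = \Phi$, and substituting this into the expression above yields exactly
\begin{equation*}
r(t) = \Phi \gamma(t) \Phi - \gamma(t+1)\Phi - \Phi \gamma(t-1) + \gamma(t),
\end{equation*}
as claimed.

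There is no real obstacle here; the only delicate point is the appeal to $\Phi^\top = \Phi$, which is immediate from the symmetry of $H$. The remaining manipulations rely only on linearity of expectation and stationarity of $X$, and the fact that $X$ is centred and square integrable (already granted in the running assumptions preceding the lemma), so that all the expectations involved are finite and the autocovariance function $\gamma$ is well defined.
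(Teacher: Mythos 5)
Your proposal is correct and follows essentially the same route as the paper: expand $(X_t-\Phi X_{t-1})(X_0-\Phi X_{-1})^\top$, take expectations, and use stationarity together with the symmetry of $\Phi=e^{-H}$ (which the paper uses implicitly by writing $X_{-1}^\top\Phi$ rather than $X_{-1}^\top\Phi^\top$). Your version merely makes the appeal to $\Phi^\top=\Phi$ explicit, which is a harmless elaboration of the same argument.
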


\begin{rem}
In the proof of the lemma, we have utilized the increment process $\Delta G$ of the noise similarly as in \cite{voutilainen2017model} yielding quadratic equations for $\Phi$. However, for a general stationary $X$, \eqref{noncare} is a symmetric CARE only if $t=0$, and even in this case, existence of a unique positive semidefinite solution is not guaranteed.
\end{rem}
By applying a similar approach as in \cite{voutilainen2019vector} by considering the noise $G$ directly, we obtain a set of symmetric CAREs on which we  construct an estimator for the model parameter $\Phi = e^{-H}$. For this, we define the following matrix coefficients.

\begin{defi}
\label{defi:coefofriccatidisc}
We set
\begin{align*}
B_t &= \sum_{k=1}^t  \gamma(k-1) - \gamma(k)^\top\\
C_t &= \sum_{k=1}^ t \sum_{j=1}^ t \gamma(k-j)\\ 
D_t &= v(t) - 2\gamma(0) + \gamma(t) + \gamma(t)^\top
\end{align*}
 for every $t\in \mathbb{N}$.
\end{defi}

\begin{theorem}
\label{theorem:riccatidisc}
Let $H > 0$ be fixed and set $\Theta = I- e^{-H}$. Let $X = (X_t)_{t\in\mathbb{Z}}$ be stationary of the form \eqref{characterizationdisc}. Then the CARE
\begin{equation}
\label{riccatidisc}
B_t^\top \Theta + \Theta B_t - \Theta C_t \Theta + D_t = 0
\end{equation}
is satisfied for every $t \in\mathbb{N}$.
\end{theorem}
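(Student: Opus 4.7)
My approach is to use Corollary~\ref{kor:ar1} to express the noise $G$ in terms of the observable $X$, then compute $v(t) = \e G_t G_t^\top$ directly and recognise the components of \eqref{riccatidisc} among the resulting sums of autocovariances.

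Setting $\Theta = I - e^{-H}$ and rearranging \eqref{characterizationdisc} gives $\Delta_t G = \Delta_t X + \Theta X_{t-1}$. Summing this telescoping relation from $k=1$ to $t$ yields
\begin{equation*}
G_t = X_t - X_0 + \Theta \sum_{k=0}^{t-1} X_k,
\end{equation*}
so that $v(t) = \e G_t G_t^\top$ decomposes into four bilinear blocks.

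I would then evaluate each block using $\e X_a X_b^\top = \gamma(a-b)$ together with $\gamma(-s) = \gamma(s)^\top$ and the symmetry of $\Theta$ (inherited from that of $H$). The pure $(X_t - X_0)(X_t - X_0)^\top$ block gives $2\gamma(0) - \gamma(t) - \gamma(t)^\top$. Each of the two mixed blocks, after the single index shift $k \mapsto k+1$ in the inner sum, collapses into $-B_t^\top \Theta$ and $-\Theta B_t$ respectively, because $\sum_{k=1}^t\bigl[\gamma(k) - \gamma(k-1)^\top\bigr]$ is precisely $-B_t^\top$ from Definition~\ref{defi:coefofriccatidisc}. The purely quadratic block reduces to $\Theta C_t \Theta$ once the double sum $\sum_{k,j=0}^{t-1} \gamma(k-j)$ is reindexed into $\sum_{k,j=1}^t \gamma(k-j) = C_t$. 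Assembling the four pieces produces
\begin{equation*}
v(t) = 2\gamma(0) - \gamma(t) - \gamma(t)^\top - B_t^\top \Theta - \Theta B_t + \Theta C_t \Theta,
\end{equation*}
and isolating $D_t = v(t) - 2\gamma(0) + \gamma(t) + \gamma(t)^\top$ on the left yields \eqref{riccatidisc}.

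The main obstacle is the careful bookkeeping of transposes and index shifts in the two mixed blocks: the identification of $\sum_{k=1}^t[\gamma(k) - \gamma(k-1)^\top]$ with $-B_t^\top$ is what makes the mixed contributions line up with the coefficient $B_t$ of \eqref{riccatidisc}, and the symmetry of $\Theta$ is what turns the resulting algebraic identity into a genuinely symmetric CARE of the form \eqref{riccati2}.
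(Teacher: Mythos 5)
Your proposal is correct and follows essentially the same route as the paper's proof: rearranging the characterization to get $G_t = X_t - X_0 + \Theta\sum_{k=1}^{t}X_{k-1}$, expanding $v(t)=\e G_tG_t^\top$ into the four bilinear blocks, and identifying them with $2\gamma(0)-\gamma(t)-\gamma(t)^\top$, $-B_t^\top\Theta$, $-\Theta B_t$ and $\Theta C_t\Theta$. The index bookkeeping you flag, in particular $\sum_{k=1}^{t}\bigl[\gamma(k)-\gamma(k-1)^\top\bigr]=-B_t^\top$, is exactly the computation carried out in the paper.
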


\begin{rem}
\label{rem:centred}
Equations \eqref{noncare} and \eqref{riccatidisc} are covariance based. Consequently, they hold also when $X$ and $G$ in Theorem \ref{theorem:chardisc} are not centred.
\end{rem}

\begin{rem}
By Lemma \ref{lemma:Phi}, the matrix $\Theta$ is positive definite. Since
\begin{equation*}
C_t = \e \sum_{k= 1}^t X_{k-1} \left( \sum_{k=1}^t X_{k-1} \right) ^\top = \mathrm{cov}\left(\sum_{k=1}^t X_{k-1}\right),
\end{equation*}
the matrix $C_t$ is positive semidefinite. Furthermore, if the smallest eigenvalue of $v(t)$ grows enough in time, $D_t$ becomes positive definite (see \cite{voutilainen2019vector}). This is the case e.g. when the noise has independent components with growing variances.
\end{rem}

We give a couple of examples on how some basic multivariate processes of ARMA type can be presented in the form \eqref{ar1}, and how to derive the corresponding noise $G$ together with its covariance function $v$.

\begin{ex}
\label{ex:ar1}
Let $X$ be a $n$-dimensional stationary AR$(1)$ type of process given by
\begin{equation*}
X_t - \phi X_{t-1} = \epsilon_t,
\end{equation*}
with $0<\phi=Q\Lambda Q^\top$, $\Vert \phi \Vert < 1$ and $\epsilon \sim iid (0, \Sigma)$. Then, we may set $H = -Q \mathrm{diag}(  \log \lambda_i) Q^\top$ giving $\Phi = \phi$. Now $\Delta G = \epsilon$ and $G_t = \sum_{k=1}^t \epsilon_k$. Furthermore, $v(t) = \sum_{k=1}^t \mathrm{cov} (\epsilon_k) = t\Sigma$ for $t\geq 1$.
\end{ex}

\begin{ex}
\label{ex:arma1q}
Let $X$ be a $n$-dimensional stationary ARMA$(1,q)$ type of process given by 
\begin{equation*}
X_t - \phi X_{t-1} = \epsilon_t + \theta_1  \epsilon_{t-1} + \hdots + \theta_q \epsilon_{t-q},
\end{equation*}
with $0 < \phi $, $\Vert \phi \Vert < 1$ and $\epsilon \sim iid (0, \Sigma)$. Similarly as above, we may set $\Phi = \phi$ and now $\Delta G$ equals to the MA$(q)$ process on the right. Consequently, for $t \geq 1$,
\begin{equation*}
G_t = \sum_{k=1}^t \epsilon_k + \theta_1 \epsilon_{k-1} + \hdots + \theta_q \epsilon_{k-q}
\end{equation*}
and
\begin{equation*}
v(t) = \sum_{i,j =0}^ q \max(0, t-|i-j|) \theta_i \Sigma \theta_j^\top ,
\end{equation*}
where $\theta_0 = I$.
\end{ex}

In \cite{voutilainen2017model} we proposed an estimation method of one-dimensional stationary processes based on equations \eqref{noncare}. In particular, we showed that the method is applicable except in some special class of stationary processes. In \cite{voutilainen2018note} we provided a comprehensive analysis of the class, and proved that it consists of highly degenerate processes. On the other hand, due to the strong dependence structure, the failure of different estimation methods is expected. Fundamentally, a stationary process $X$ belongs to the class if there exists two values $H$ and $\tilde{H}$ such that the corresponding processes $\Delta G$ and $\Delta \tilde{G}$ in \eqref{ar1} have identical autocovariance functions. Next, we state a lemma showing that these degenerate processes have a special characteristic also under the new set of equations \eqref{riccatidisc}.

\begin{lemma}
\label{lemma:cyclic}
Let $X$ be a one-dimensional stationary process and let $H > 0$ be fixed. Set $\Phi = e^{-H}$ and 
\begin{equation*}
X_t - \Phi X_{t-1} = \Delta_t G, \qquad t\in\mathbb{Z}.
\end{equation*}
If the equation
\begin{equation}
\label{quadratic1}
 \gamma(t)\Phi^2 - ( \gamma(t+1) - \gamma(t-1))\Phi + \gamma(t) - r(t) =0
\end{equation}
yields the same two solutions $\Phi, \tilde{\Phi} > 0$ for every $t\in\mathbb{Z}$, then also the equation 
\begin{equation}
\label{quadratic2}
C_t \Theta^2 - 2B_t \Theta - D_t = 0
\end{equation}
yields the same two solutions $\Theta = 1-\Phi$ and $\tilde{\Theta} = 1-\tilde{\Phi}$ for every $t\in\mathbb{N}$.
\end{lemma}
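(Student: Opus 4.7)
My plan is to translate the hypothesis into a linear recurrence for the autocovariance $\gamma$, invoke Theorem~\ref{theorem:riccatidisc} to secure one root of \eqref{quadratic2}, and then verify via Vieta's sum that $\tilde\Theta = 1-\tilde\Phi$ is the second root.

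First I would apply Vieta's formulas to \eqref{quadratic1}, reading the coefficient of the linear term as $\gamma(t+1)+\gamma(t-1)$ in accordance with \eqref{noncare} in one dimension. The hypothesis that $\Phi$ and $\tilde\Phi$ are the two roots for \emph{every} $t\in\mathbb{Z}$ is then equivalent to
\begin{equation*}
\gamma(t+1) + \gamma(t-1) = s\,\gamma(t), \qquad r(t) = (1-p)\gamma(t), \qquad t\in\mathbb{Z},
\end{equation*}
with $s = \Phi + \tilde\Phi$ and $p = \Phi\tilde\Phi$. In particular $\gamma$ obeys a homogeneous two-term linear recurrence with constant coefficients.

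Next, Theorem~\ref{theorem:riccatidisc} specialised to dimension one already gives that $\Theta = 1-\Phi$ is a root of $C_t\Theta^2 - 2B_t\Theta - D_t = 0$. Since $\Phi \neq \tilde\Phi$, it then suffices, by Vieta's formula on \eqref{quadratic2}, to verify that the sum of the two roots of \eqref{quadratic2} equals $\Theta + \tilde\Theta = 2-s$; equivalently,
\begin{equation*}
s\,C_t = 2(C_t - B_t).
\end{equation*}

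The verification of this identity is the main computation and the principal obstacle. Telescoping immediately yields $B_t = \gamma(0) - \gamma(t)$. Substituting the recurrence $s\gamma(m) = \gamma(m+1) + \gamma(m-1)$ into $C_t = \sum_{k,j=1}^{t} \gamma(k-j)$ and performing the index shifts $k\mapsto k+1$ in the first resulting sum and $j\mapsto j+1$ in the second, each half returns $C_t$ plus boundary contributions of the form $\sum_{j=1}^t[\gamma(t+1-j) - \gamma(1-j)]$ and its mirror image. These boundary sums collapse to $\gamma(t) - \gamma(0)$ each, but only after the symmetry $\gamma(-m) = \gamma(m)$ is used to convert the shifted-out negative lags into positive ones; this is the delicate step, since a naive reindexing leaves the leftover terms asymmetric and unable to reduce to $B_t$. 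Combining everything gives $sC_t = 2C_t + 2\gamma(t) - 2\gamma(0) = 2(C_t - B_t)$, which is exactly what is needed to conclude that $\tilde\Theta = 1-\tilde\Phi$ is the remaining root of \eqref{quadratic2} for every $t\in\mathbb{N}$.
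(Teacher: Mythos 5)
Your proof is correct, but it takes a genuinely different route from the paper's. The paper's argument is representation-theoretic: it introduces the second noise process $\tilde G$ defined by $X_t-\tilde\Phi X_{t-1}=\Delta_t\tilde G$, observes that the hypothesis forces $\tilde r=r$ (hence $\tilde v=v$ and the same $D_t$), and then invokes Theorem \ref{theorem:riccatidisc} twice to conclude that both $\Theta$ and $\tilde\Theta$ solve \eqref{quadratic2}; no Vieta computation on $B_t,C_t$ is needed. You instead invoke Theorem \ref{theorem:riccatidisc} only once (for $\Theta=1-\Phi$), translate the hypothesis via Vieta into the recurrence $\gamma(t+1)+\gamma(t-1)=s\gamma(t)$, and pin down the second root of \eqref{quadratic2} by verifying the sum-of-roots identity $(2-s)C_t=2B_t$; I checked your index-shift computation (e.g.\ $t=1,2$ give $s\gamma(0)=2\gamma(1)$ and $s(2\gamma(0)+2\gamma(1))=2\gamma(0)+4\gamma(1)+2\gamma(2)$, both consistent with the recurrence) and it is sound. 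The paper's route is shorter and avoids dividing by $C_t$, which your Vieta step implicitly requires (harmless, since the conclusion already presupposes \eqref{quadratic2} is a genuine quadratic); your route has the merit of never constructing $\tilde G$ or needing $\tilde v=v$, and it makes explicit the algebraic link between the coefficients of \eqref{quadratic1} and \eqref{quadratic2}. You were also right to read the linear coefficient of \eqref{quadratic1} as $\gamma(t+1)+\gamma(t-1)$: the displayed minus sign is inconsistent with \eqref{noncare} in dimension one (and would force $s=0$ at $t=0$, making the hypothesis vacuous), so it is evidently a typo that the paper's own proof also silently corrects.
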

For estimation, it is desirable that \eqref{riccatidisc} admits a unique positive semidefinite solution guaranteeing convergence to the correct parameter matrix. Hence, we simply assume that $t$ is chosen in such a way that $C_t, D_t >0$, and we omit the subindex $t$ from Equation \eqref{riccatidisc}. We have justified the assumption of positive definiteness in detail in continuous time (see Subsection 2.1 and Remark 2.11 in \cite{voutilainen2019vector}). Furthermore, we assume that $v(t)$ is known and the stationary process $X$ is observed up to the time $T >t$, and the coefficient matrices $B, C$ and $D$ are estimated from these observations by replacing the autocovariances $\gamma(\cdot)$ with some estimators $\hat{\gamma}_T(\cdot)$. The coefficient estimators are denoted by $\hat{B}_T, \hat{C}_T$ and $\hat{D}_T$, and we set
\begin{equation*}
\Delta_T B =  \hat{B}_T  - B, \quad  \Delta_T C =  \hat{C}_T  - C,\quad  \Delta_T D =  \hat{D}_T  - D.
\end{equation*}
Next, we define an estimator $\hat{\Theta}_T$ for the matrix $\Theta = I - \Phi = I - e^{-H}$. The proofs of the related asymptotic results allow a certain amount of flexibility in the definition. Thus, we give a definition that probably is the most convenient from the practical point of view. Consistency and the rate of convergence of $\hat{\Theta}_T$ are inherited from autocovariance estimators $\hat{\gamma}_T(\cdot)$ of the observed stationary process. In addition, the limiting distribution is obtained as a linear function of the limiting distribution of the autocovariance estimators.

\begin{defi}
\label{defi:estimatordisc}
The estimator $\hat{\Theta}_T$ is defined as the unique positive semidefinite solution to the perturbed CARE
\begin{equation*}
\label{perCAREdisc}
\hat{\Bh}_T^\top\hat{\Theta}_T + \hat{\Theta}_T\hat{\Bh}_T - \hat{\Theta}_T\hat{\Ch}_T\hat{\Theta}_T + \hat{\Dh}_T = 0
\end{equation*}
whenever $\hat{C}_T, \hat{D}_T > 0$. Otherwise, we set $\hat{\Theta}_T = 0$.
\end{defi}

\begin{theorem}
\label{theorem:consistencydisc}
Let $C, D >0$. Assume that 
\begin{equation*}
\max_{s\in\{0,1,\hdots,t\}} \Vert \hat{\gamma}_T(s) - \gamma(s) \Vert \overset{\p}{\longrightarrow} 0.
\end{equation*}
Then
\begin{equation*}
\Vert\hat{\Theta}_T - \Theta \Vert \overset{\p}{\longrightarrow} 0,
\end{equation*}
where $\hat{\Theta}_T$ is given by Definition \ref{defi:estimatordisc}.
\end{theorem}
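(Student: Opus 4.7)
The plan is to transfer convergence from the autocovariance estimators to the solution of the CARE by combining a continuous-mapping argument for the coefficient matrices with a perturbation bound for the CARE itself. The argument naturally splits into three stages.

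First, I would establish that $(\hat{B}_T, \hat{C}_T, \hat{D}_T) \overset{\p}{\longrightarrow} (B, C, D)$. Because $v(t)$ is assumed known, each of $B_t, C_t, D_t$ in Definition \ref{defi:coefofriccatidisc} is a finite linear combination of $\gamma(s)$ and $\gamma(s)^\top$ for $s \in \{0, 1, \ldots, t\}$ (using $\gamma(-s) = \gamma(s)^\top$). The triangle inequality together with the hypothesis $\max_{s \in \{0,\ldots,t\}} \Vert \hat{\gamma}_T(s) - \gamma(s) \Vert \overset{\p}{\longrightarrow} 0$ then yields
\begin{equation*}
\Vert \Delta_T B \Vert + \Vert \Delta_T C \Vert + \Vert \Delta_T D \Vert \overset{\p}{\longrightarrow} 0.
\end{equation*}

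Second, I would pass to the event where the estimator is nontrivially defined. Since $C, D > 0$, Weyl's eigenvalue inequality applied to the coefficient perturbations yields $\p(\hat{C}_T > 0,\ \hat{D}_T > 0) \to 1$. On this event $\hat{\Theta}_T$ is, by Definition \ref{defi:estimatordisc}, the unique positive semidefinite solution of the perturbed CARE, while $\Theta$ is the unique positive semidefinite solution of the unperturbed CARE furnished by Theorem \ref{theorem:riccatidisc}. On the complementary event $\hat{\Theta}_T = 0$, but this event has probability tending to zero and therefore contributes negligibly to any convergence-in-probability statement.

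Third, I would invoke a perturbation result for symmetric CAREs with positive (semi)definite weights, for instance the Lipschitz estimate of \cite{sun1998perturbation} cited earlier in the paper. This supplies constants $K, \delta > 0$ depending only on the nominal $(B, C, D, \Theta)$ such that, whenever the three coefficient perturbations are all smaller than $\delta$,
\begin{equation*}
\Vert \hat{\Theta}_T - \Theta \Vert \leq K \bigl( \Vert \Delta_T B \Vert + \Vert \Delta_T C \Vert + \Vert \Delta_T D \Vert \bigr).
\end{equation*}
Combining this bound with the first stage and absorbing the vanishing exceptional event from the second stage yields $\Vert \hat{\Theta}_T - \Theta \Vert \overset{\p}{\longrightarrow} 0$.

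The main obstacle is the third step: one has to verify that the true CARE at $(B, C, D)$ satisfies the stability hypotheses under which the perturbation estimate is valid, typically that the associated Hamiltonian pencil has the standard dichotomy, or equivalently that $B - C\Theta$ is stable. The positive definiteness of $C$ and $D$ is precisely what secures these hypotheses and underpins both the existence of a unique positive semidefinite $\Theta$ and its Lipschitz dependence on the coefficients, so once Steps 1 and 2 are in hand the third step is a direct application of the cited perturbation theorem.
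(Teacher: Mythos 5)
Your proposal is correct and follows essentially the same route as the paper: your first stage is exactly the paper's Lemma \ref{lemma:bounds} (bounding $\Vert\Delta_T B\Vert$, $\Vert\Delta_T C\Vert$, $\Vert\Delta_T D\Vert$ by constant multiples of $\max_{s\in\{0,\ldots,t\}}\Vert\hat{\gamma}_T(s)-\gamma(s)\Vert$), and your second and third stages (restricting to the event $\hat{C}_T,\hat{D}_T>0$, then applying a local Lipschitz perturbation bound for the unique positive semidefinite solution of the CARE) are precisely the argument the paper imports from Corollary 3.14 and the proof of Theorem 2.9 of \cite{voutilainen2019vector}. Your closing observation that $C,D>0$ is what secures the stabilizability/detectability hypotheses underlying both uniqueness of $\Theta$ and the validity of the perturbation estimate is the correct justification for that final step.
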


\begin{theorem}
\label{theorem:asymptoticsdisc}
Let $l(T)$ be a rate function. If 

\begin{equation*}
l(T)\begin{bmatrix*}
\vect (\hat{\gamma}_T(0) - \gamma(0))\\
\vect (\hat{\gamma}_T(1) - \gamma(1))\\
\vdots\\
\vect (\hat{\gamma}_T(t) - \gamma(t))\
\end{bmatrix*}
\overset{\text{law}}{\longrightarrow} Z,
\end{equation*}
where $Z$ is a $(t+1)n^2$-dimensional random vector, then:
\begin{itemize}
\item[(1)] Let $\tilde{Z}$ be the permutation of elements of $Z$ corresponding to the order of elements of
\begin{equation*}
\begin{bmatrix*}
\vect \left((\boh(0) - \gamma(0))^\top\right)\\
\vect \left((\boh(1) - \gamma(1))^\top\right)\\
\vdots\\
\vect \left((\boh(t) - \gamma(t))^\top\right)
\end{bmatrix*}.
\end{equation*}
Define a linear mapping $L_1: \mathbb{R}^ {(t+1)n^2} \to \mathbb{R}^ {3n^2}$ by 
\begin{equation*}
L_1(Z) = \begin{bmatrix*}
\sum_{k=0}^{t-1} (t-k) \begin{bmatrix*}
Z^{(kn^2+1)}\\
\vdots\\
Z^{((k+1)n^2)}
\end{bmatrix*}
+ \sum_{k=1}^{t-1} (t-k)\begin{bmatrix*}
\tilde{Z}^ {(kn^2+1)}\\
\vdots\\
\tilde{Z}^ {((k+1)n^2)}
\end{bmatrix*}\\
\sum_{k=1}^ t \begin{bmatrix*}
Z^{((k-1)n^2+1)} - \tilde{Z}^{(kn^2+1)}\\
\vdots\\
Z^{(kn^2)} - \tilde{Z}^ {((k+1)n^2)}
\end{bmatrix*}\\

 \begin{bmatrix*}
Z^{(tn^2+1)} + \tilde{Z}^ {(tn^2+1)}\\
\vdots\\
Z^{((t+1)n^2)} + \tilde{Z}^ {((t+1)n^2)}
\end{bmatrix*}
-2\begin{bmatrix*}
Z^{(1)}\\
\vdots\\
Z^ {(n^2)}
\end{bmatrix*}
\end{bmatrix*},
\end{equation*}
where $\sum_1^0$ is an empty sum. Then 
\begin{equation*}
l(T) \vect (\Delta_T C, \Delta_T B, \Delta_T D) \overset{\text{law}}{\longrightarrow} L_1(Z).
\end{equation*}
\item[(2)] If $D, C > 0$ and $\hat{\Theta}_T$ is given by Definition \ref{defi:estimatordisc}, then
\begin{equation*}
l(T) \vect(\hat{\Theta}_T-\Theta) \overset{\text{law}}{\longrightarrow} L_2(L_1(Z)),
\end{equation*}
where $L_2 : \mathbb{R}^ {3n^2} \to \mathbb{R}^ {n^2}$ is a linear mapping expressible in terms of $\Theta, t$ and $r$.
\end{itemize}
\end{theorem}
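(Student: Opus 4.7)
For Part~(1), my plan is to verify by direct expansion that the linear map $L_1$ is exactly $\vect$ applied to the perturbations in Definition~\ref{defi:coefofriccatidisc}. Substituting $\hat{\gamma}_T(\cdot)$ for $\gamma(\cdot)$ in that definition (and noting that the known $v(t)$ contributes nothing to $\Delta_T D$), the three matrix perturbations $\Delta_T C, \Delta_T B, \Delta_T D$ become linear combinations of the increments $\hat{\gamma}_T(s)-\gamma(s)$ and their transposes, after using $\gamma(-s)=\gamma(s)^\top$ and $\hat{\gamma}_T(-s)=\hat{\gamma}_T(s)^\top$ to re-index the double sum defining $C_t$. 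A bookkeeping check matches the coefficients $(t-k)$, $\pm 1$ and $\pm 2$ in the three blocks of $L_1$; since $\vect(A^\top)$ is exactly the permutation of $\vect(A)$ referenced via $\tilde Z$, the equality $\vect(\Delta_T C,\Delta_T B,\Delta_T D)=L_1\bigl([\vect(\hat{\gamma}_T(s)-\gamma(s))]_{s=0}^{t}\bigr)$ holds identically. The continuous mapping theorem applied to the continuous linear map $L_1$ then yields the conclusion.

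For Part~(2), I would linearize the perturbed Riccati equation around the true solution. Writing $\hat{B}_T=B+\Delta_T B$, $\hat{C}_T=C+\Delta_T C$, $\hat{D}_T=D+\Delta_T D$ and $\hat{\Theta}_T=\Theta+\Delta_T\Theta$ in the perturbed CARE of Definition~\ref{defi:estimatordisc} and subtracting \eqref{riccatidisc}, the zeroth-order terms cancel; collecting first-order terms produces the Sylvester-type equation
\begin{equation*}
M^\top\Delta_T\Theta + \Delta_T\Theta\,M \;=\; \Theta\,\Delta_T C\,\Theta - \Theta\,\Delta_T B - \Delta_T B^\top\Theta - \Delta_T D + R_T,
\end{equation*}
where $M=B-C\Theta$ (so that $M^\top=B^\top-\Theta C$ by symmetry of $C$ and $\Theta$) and $R_T$ collects the terms quadratic in $\Delta_T B,\Delta_T C,\Delta_T\Theta$. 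Vectorizing via $\vect(AXB)=(B^\top\otimes A)\vect(X)$ converts the left-hand side to $K\vect(\Delta_T\Theta)$ with Kronecker-sum operator $K=I\otimes M^\top+M^\top\otimes I$.

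The key structural step is to establish invertibility of $K$. Under $C,D>0$ the unique positive definite solution $\Theta$ of \eqref{riccatidisc} is the \emph{stabilizing} solution in the standard control-theoretic sense, forcing the eigenvalues of $M=B-C\Theta$ into the open left half-plane; consequently the eigenvalues of $K$, which are $\lambda_i(M)+\lambda_j(M)$, all lie there, so $K$ is invertible. Inverting gives
\begin{equation*}
\vect(\Delta_T\Theta) = K^{-1} L_2'\!\bigl(\vect(\Delta_T C,\Delta_T B,\Delta_T D)\bigr) + K^{-1}\vect(R_T),
\end{equation*}
where $L_2'$ is the explicit linear map $(X_1,X_2,X_3)\mapsto \Theta X_1\Theta-\Theta X_2 - X_2^\top\Theta - X_3$, which depends only on $\Theta$. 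Setting $L_2:=K^{-1}\!\circ L_2'$, the map $L_2$ depends only on $\Theta,t,r$ (since $\gamma(\cdot)$, and hence $B,C,M$, are functions of $\Theta$ and $r$ via Remark~\ref{rem:covariance}). Consistency (Theorem~\ref{theorem:consistencydisc}) combined with Part~(1) gives $\|\Delta_T B\|,\|\Delta_T C\|,\|\Delta_T\Theta\|=O_p(1/l(T))$, so $l(T)\vect(R_T)\overset{\p}{\to}0$, and Slutsky together with Part~(1) and continuity of $L_2$ closes the argument.

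The main obstacle I expect is justifying the stabilizing property of the positive definite solution and hence the invertibility of $K$; this requires invoking Riccati equation theory rather than working only with the algebraic statement of \eqref{riccatidisc}. A more routine but delicate nuisance is the bookkeeping required to guarantee that $R_T$ is genuinely $O_p(l(T)^{-2})$, uniformly enough that $l(T)\vect(R_T)$ vanishes in probability despite being divided by the (random, but asymptotically deterministic) operator $\hat K=I\otimes\hat M^\top+\hat M^\top\otimes I$ on the perturbed side.
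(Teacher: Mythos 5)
Your Part~(1) is exactly the paper's argument: re-index the double sum defining $C_t$ using $\gamma(-s)=\gamma(s)^\top$ to get $C_t=\sum_{l=0}^{t-1}(t-l)\gamma(l)+\sum_{l=1}^{t-1}(t-l)\gamma(l)^\top$, read off $\Delta_T C,\Delta_T B,\Delta_T D$ as the stated linear combinations of $\hat{\gamma}_T(s)-\gamma(s)$ and their transposes, identify the transpose blocks with the permutation $\tilde Z$, and finish with the continuous mapping theorem. For Part~(2) the paper does not give an argument at all: it states that the continuous-time proof of \cite{voutilainen2019vector} carries over verbatim once $\sup_{s\in[0,t]}\Vert\hat{\gamma}_T(s)-\gamma(s)\Vert$ is replaced by $M_{t,T}$ in the definition of the event $A_T$. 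What you propose is essentially a reconstruction of that outsourced argument: linearize the perturbed CARE around $\Theta$, obtain the Sylvester equation with closed-loop matrix $M=B-C\Theta$, invoke the fact that under $C,D>0$ the unique positive semidefinite solution is stabilizing so the Kronecker sum $I\otimes M^\top+M^\top\otimes I$ is invertible, and control the quadratic remainder via consistency. This is the standard CARE perturbation analysis (in the spirit of \cite{sun1998perturbation}) that underlies the cited proof, so your route is substantively the same, just made explicit; the one ingredient you should not gloss over is the event $A_T=\{\hat C_T>0,\hat D_T>0\}$ on which $\hat\Theta_T$ is actually a Riccati solution rather than the default value $0$ — the paper's proof works by splitting on $A_T$ and its complement and using $\p(A_T^c)\to 0$, and your Slutsky step needs that same splitting to be airtight. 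Your own identification of the two delicate points (the stabilizing property of the positive definite solution, and showing $l(T)\Vert R_T\Vert\overset{\p}{\to}0$) is accurate; both are handled in the reference rather than in this paper.
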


%

\subsection{Continuous time}
\label{sec:maincont}
We have collected the main results (Theorems \ref{theorem:charcont}, \ref{theorem:riccaticont} and \ref{theorem:fclt}) of \cite{voutilainen2019vector} considering continuous time stationary processes into this subsection. In addition, in order to complete the analogue between discrete and continuous time, we derive quadratic equations for the model parameter by using the increments of the noise process (Lemma \ref{lemma:nonriccaticont}). We assume that the processes have continuous paths almost surely and hence, the related stochastic integrals can be interpreted as pathwise Riemann-Stieltjes integrals. Again, we start by defining the class $\mathcal{G}_H$ of stationary increment processes for $H > 0$.

\begin{defi}
\label{defi:GH2}
Let $H >0$ and let $G = (G_t)_{t\in \mathbb{R}}$ be a $n$-dimensional stochastic process with stationary increments and $G_0 = 0$. If 
\begin{equation*}
\lim_{s\to -\infty} \int_s^0 e^{Hu} dG_u
\end{equation*}
exists in probability and defines an almost surely finite random variable, we denote $G \in \mathcal{G}_H$.
\end{defi}
As in discrete time, it can be shown that existence of some logarithmic moments ensure that $G \in \mathcal{G}_H$ for all $H > 0$. In particular, square integrability of $G$ suffices, which is the case in our second moment based estimation method.

The next theorem is the continuous time counterpart of Theorem \ref{theorem:chardisc} showing that all stationary processes are characterized by the Langevin equation, whereas in discrete time, the characterization was given by an AR$(1)$ type of equation.

\begin{theorem}
\label{theorem:charcont}
Let $H > 0$ be fixed and let $X = (X_t)_{t\in\mathbb{R}}$ be a $n$-dimensional stochastic process. Then $X$ is stationary if and only if
\begin{equation*}
\label{initial}
 X_0 = \int_{-\infty}^0 e^{Hu} dG_u
\end{equation*}
and
\begin{equation}
\label{langevin}
dX_t = -HX_t dt + dG_t, \quad
\end{equation}
for $G \in\mathcal{G}_H$ and $t\in\mathbb{R}$. Moreover, the process $G \in \mathcal{G}_H$ is unique.
\end{theorem}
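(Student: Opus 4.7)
The plan is to prove the equivalence by establishing sufficiency and necessity separately, and then to handle uniqueness as a short corollary of the Langevin equation itself.

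For sufficiency, assume $G \in \mathcal{G}_H$, the initial condition $X_0 = \int_{-\infty}^0 e^{Hu}\, dG_u$, and the Langevin equation \eqref{langevin}. The plan is to apply the integrating factor $e^{Ht}$: setting $Y_t = e^{Ht} X_t$ turns \eqref{langevin} into $dY_t = e^{Ht}\,dG_t$, which integrates to
\[ X_t = e^{-Ht}X_0 + \int_0^t e^{-H(t-u)}\,dG_u = \int_{-\infty}^t e^{-H(t-u)}\,dG_u \]
after substituting the initial condition and using additivity of the pathwise Riemann--Stieltjes integral. Strict stationarity of $X$ then follows from the stationary increment property of $G$: a change of variables $u\mapsto u-h$ in the moving integral together with $(G_{t+h}-G_h)_{t\in\mathbb{R}} \overset{\text{law}}{=}(G_t)_{t\in\mathbb{R}}$ shows that the finite-dimensional distributions of $X$ are invariant under time shifts.

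For necessity, given a stationary $X$, the plan is to construct $G$ explicitly by
\[ G_t = X_t - X_0 + H\int_0^t X_u\,du,\qquad t\in\mathbb{R}, \]
so that $G_0 = 0$ and $dG_t = dX_t + HX_t\,dt$ recovers \eqref{langevin} by design. The remaining tasks are: (a) stationary increments of $G$, which follow from strict stationarity of $X$ because $G_{t+h}-G_{s+h}$ is a deterministic functional of $(X_u)_{u\in[s+h,t+h]}$ whose joint law is shift-invariant; (b) $G\in\mathcal{G}_H$, obtained by pathwise integration by parts on $[s,0]$ yielding $\int_s^0 e^{Hu}\,dG_u = X_0 - e^{Hs}X_s$ (the two copies of $H\int_s^0 e^{Hu}X_u\,du$ cancel), combined with $e^{Hs}X_s \overset{\p}{\longrightarrow} 0$ as $s\to-\infty$ thanks to tightness of $\{X_s\}$ (constant marginal law by stationarity) and the exponential decay of $\Vert e^{Hs}\Vert$; (c) the initial condition, obtained by passing to the limit $s\to-\infty$ in the same identity.

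Uniqueness is immediate: if $G$ and $\tilde G$ in $\mathcal{G}_H$ both satisfy \eqref{langevin}, then $d(G-\tilde G)_t = 0$ pathwise, so $G_t - \tilde G_t$ is constant, and $G_0 = \tilde G_0 = 0$ forces equality. The main technical hurdle will be step (b) of necessity: carrying out the pathwise integration by parts against a process $G$ whose regularity is only inherited from $X$, and rigorously combining tightness of $\{X_s\}$ with the decay of $\Vert e^{Hs}\Vert$ into convergence in probability of the product. Everything else is structural and mirrors the discrete time argument for Theorem \ref{theorem:chardisc}.
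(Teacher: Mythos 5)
Your proposal is correct, but note that this paper does not actually prove Theorem \ref{theorem:charcont} itself --- Section \ref{sub:proofscontinuous} defers it to the cited continuous-time paper --- so the relevant comparison is with the proof of the discrete analogue, Theorem \ref{theorem:chardisc}, which exhibits the intended method. In the sufficiency direction you and the paper do the same thing: variation of constants gives $X_t = e^{-Ht}\int_{-\infty}^t e^{Hu}\,dG_u$, and stationarity follows from the stationary increments of $G$ after a change of variables in the truncated integrals followed by a limit in probability. The genuine divergence is in the necessity direction: the paper's route goes through the Lamperti transform (Theorem \ref{theorem:lamperti} and Lemma \ref{lemma:YtoG}), producing the noise from the $H$-self-similar process $Y = \mathcal{L}_H X$ and obtaining membership in $\mathcal{G}_H$ from $Y_{e^{s}} \to 0$ in probability; you instead define $G_t = X_t - X_0 + H\int_0^t X_u\,du$ directly (the very identity the paper uses later in the proof of Lemma \ref{lemma:nonriccaticont}) and verify $G \in \mathcal{G}_H$ via $\int_s^0 e^{Hu}\,dG_u = X_0 - e^{Hs}X_s$ together with $\p(\Vert e^{Hs}X_s\Vert > \epsilon) \leq \p(\Vert X_0 \Vert > \epsilon e^{-\lambda_{min}s}) \to 0$. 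Both arguments are sound and necessarily yield the same $G$ by your uniqueness step. The technical hurdle you flag is not a real one: since the integrand $e^{Hu}$ is continuously differentiable and of bounded variation on compacts, the pathwise Riemann--Stieltjes integration by parts requires only continuity of $X$ (which the paper assumes in the continuous-time setting), and the two copies of $H\int_s^0 e^{Hu}X_u\,du$ cancel exactly as you claim. What the direct construction buys is a shorter, self-contained argument with an explicit formula for $G$ in terms of $X$; what the Lamperti route buys is a uniform treatment of the discrete and continuous cases and a structural explanation of why the class $\mathcal{G}_H$ is exactly the right noise class, namely the image of increments of $H$-self-similar processes.
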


\begin{kor}
From Theorem \ref{theorem:charcont} it follows that $X$ is the unique stationary solution
\begin{equation}
\label{stationarysolution}
X_t = e^{-Ht} \int_{-\infty}^t e^{Hu} dG_u
\end{equation}
to \eqref{langevin}.
\end{kor}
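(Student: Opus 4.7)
The plan is to solve the Langevin equation \eqref{langevin} explicitly via the integrating factor $e^{Ht}$, using the initial condition supplied by Theorem \ref{theorem:charcont}, and then deduce uniqueness from the uniqueness of $G$ asserted in that theorem.

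First I would invoke Theorem \ref{theorem:charcont} to fix, for the stationary $X$, the unique driver $G \in \mathcal{G}_H$ with $X_0 = \int_{-\infty}^0 e^{Hu} dG_u$ and $dX_t = -HX_t dt + dG_t$. Applying the pathwise product/integration-by-parts rule to $e^{Ht}X_t$ and substituting the Langevin equation yields
$$d(e^{Ht} X_t) = He^{Ht}X_t\, dt + e^{Ht} dX_t = e^{Ht} dG_t.$$
Integrating this identity from $0$ to $t$ (and from $t$ to $0$ when $t<0$) and substituting the expression for $X_0$ gives
$$e^{Ht} X_t = X_0 + \int_0^t e^{Hu} dG_u = \int_{-\infty}^t e^{Hu} dG_u,$$
and multiplying by $e^{-Ht}$ yields formula \eqref{stationarysolution}.

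For uniqueness among stationary solutions of \eqref{langevin}, suppose $Y$ is another one. Theorem \ref{theorem:charcont} applied to $Y$ produces the same driver $G$ (by its uniqueness clause) and hence the same initial value $Y_0 = \int_{-\infty}^0 e^{Hu} dG_u = X_0$. The difference $Z_t = X_t - Y_t$ then satisfies the deterministic linear ODE $dZ_t = -HZ_t dt$ with $Z_0 = 0$, forcing $Z \equiv 0$ and $X = Y$.

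The only delicate point is justifying the product-rule step $d(e^{Ht}X_t) = He^{Ht}X_t\, dt + e^{Ht}dG_t$ with a nondifferentiable integrator $G$. Because the paper works under almost-sure continuity of paths and interprets the stochastic integrals pathwise as Riemann--Stieltjes integrals, this is just the classical integration-by-parts formula applied componentwise to the smooth matrix function $t\mapsto e^{Ht}$, with $X$ inheriting continuity from $G$ through \eqref{langevin}. Everything else is routine algebraic rearrangement.
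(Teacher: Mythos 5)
Your argument is correct and is essentially the intended derivation: the paper defers the proof of this corollary to \cite{voutilainen2019vector}, but the integrating-factor computation $d(e^{Ht}X_t)=e^{Ht}\,dG_t$ combined with the initial condition from Theorem \ref{theorem:charcont} is exactly the standard route, and it mirrors the discrete-time derivation of \eqref{mainfty} in the proof of Theorem \ref{theorem:chardisc}. Your uniqueness step is also sound; the only point worth making explicit is that the driver in \eqref{langevin} is recovered from any solution $Y$ via $G_t = Y_t - Y_0 + H\int_0^t Y_s\,ds$ together with $G_0=0$ (cf.\ the proof of Lemma \ref{lemma:nonriccaticont}), which is why the uniqueness clause of Theorem \ref{theorem:charcont} forces the driver of $Y$ to equal $G$ and hence $Y_0=X_0$.
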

In order to apply Theorem \ref{theorem:charcont} in estimation, we pose the assumption
\begin{equation*}
\sup_{s\in[0,1]}\e \Vert G_s \Vert^2 < \infty.
\end{equation*}
This guarantees that $G \in \mathcal{G}_H$ for all $H >0$, and square integrability of $X$ and $G$. On the other hand, if $X$ is square integrable, then $G$ is also. In addition and without loss of generality, we assume that the processes are centred. Again, we write $\gamma(t) = \e X_t X_0^\top$ and $v(t) = \e G_t G_t^\top$. Now, the autocovariance function of the following stationary process is well-defined.
\begin{defi}
\label{defi:incrementprocess}
Let $G = (G_t)_{t\in\mathbb{R}}$ be a centred square integrable stationary increment process and let $\delta > 0$. We define a stationary process $\Delta^\delta G = (\Delta_t^\delta G)_{t\in\mathbb{R}}$ by
\begin{equation*}
\Delta_t^\delta G = G_t - G_{t-\delta}
\end{equation*} 
and the corresponding autocovariance function $r_\delta$ by 
\begin{equation*}
r_\delta(t) =\e (\Delta_t^\delta G)(\Delta_0^\delta G)^\top.
\end{equation*}
\end{defi}
As in discrete time (Lemma \ref{lemma:nonriccatidisc}), we obtain quadratic equations for the model parameter $H$ in terms of $r_\delta$.
\begin{lemma}
\label{lemma:nonriccaticont}
Let $H>0$ be fixed and let $X$ be of the form \eqref{stationarysolution}. Then 
\begin{scriptsize}
\begin{equation}
\label{noncarecont}
\begin{split}
r_\delta(t) = &2\gamma(t) - \gamma(t+\delta) - \gamma(t-\delta) + \left(\int_t^{t+\delta} \gamma(s) ds - \int_{t-\delta}^ t \gamma(s) ds \right) H + H\left(\int_{t-\delta}^ t \gamma(s) ds - \int_t^{t+\delta} \gamma(s) ds \right)\\
& + H \left(\int_{t-\delta}^ t (s-t+\delta)\gamma(s) ds + \int_t^{t+\delta} (t-s+\delta) \gamma(s) ds \right) H
\end{split}
\end{equation}
\end{scriptsize}
for every $t\in\mathbb{R}$.
\end{lemma}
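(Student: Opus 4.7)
The plan is to substitute the Langevin equation directly into the definition of $r_\delta(t)$ and compute the resulting nine cross-covariances one by one. From \eqref{langevin} we have $dG_s = dX_s + HX_s\,ds$, so integrating over $[t-\delta,t]$ and over $[-\delta,0]$ respectively gives
\begin{equation*}
\Delta_t^\delta G = X_t - X_{t-\delta} + H\int_{t-\delta}^t X_s\,ds, \qquad \Delta_0^\delta G = X_0 - X_{-\delta} + H\int_{-\delta}^0 X_u\,du.
\end{equation*}
Square integrability of $X$ together with the finite integration domains justifies moving expectation inside the Riemann--Stieltjes integrals via Fubini, so the evaluation of $r_\delta(t)=\e(\Delta_t^\delta G)(\Delta_0^\delta G)^\top$ reduces to identifying each of the resulting nine $\e X_a X_b^\top = \gamma(a-b)$ pieces.

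The first four (drift-free) terms $\e X_t X_0^\top - \e X_t X_{-\delta}^\top - \e X_{t-\delta} X_0^\top + \e X_{t-\delta} X_{-\delta}^\top$ immediately yield $2\gamma(t) - \gamma(t+\delta) - \gamma(t-\delta)$, matching the constant part of \eqref{noncarecont}. For the two mixed terms where $H$ sits on the left, the elementary substitutions $u\mapsto s+\delta$ in $H\int_{t-\delta}^t \gamma(s+\delta)\,ds$ convert the difference into $H\bigl(\int_{t-\delta}^t\gamma(s)\,ds-\int_t^{t+\delta}\gamma(s)\,ds\bigr)$; the analogous substitutions in the other two mixed terms (using $H^\top=H$, consistent with $\Theta=I-e^{-H}$ being symmetric positive definite) produce $\bigl(\int_t^{t+\delta}\gamma(s)\,ds - \int_{t-\delta}^t\gamma(s)\,ds\bigr)H$. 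These recover the two linear-in-$H$ summands of \eqref{noncarecont}.

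The only nontrivial step is the final double-integral term
\begin{equation*}
H\int_{t-\delta}^t\!\!\int_{-\delta}^0 \gamma(s-u)\,du\,ds\; H.
\end{equation*}
Here I would first apply $u\mapsto w:=s-u$ inside to get $\int_{s}^{s+\delta}\gamma(w)\,dw$, and then interchange the order of integration over the triangle-plus-triangle region $\{(s,w): t-\delta\le s\le t,\ s\le w\le s+\delta\}$. Splitting this region at $w=t$ gives the slice lengths $w-t+\delta$ on $[t-\delta,t]$ and $t-w+\delta$ on $[t,t+\delta]$, producing exactly the factor $\int_{t-\delta}^t(s-t+\delta)\gamma(s)\,ds + \int_t^{t+\delta}(t-s+\delta)\gamma(s)\,ds$ sandwiched between the two $H$'s. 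Combining the four contributions yields \eqref{noncarecont}.

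The proof is essentially a careful expansion; the only place where something can go wrong is the Fubini swap in the last term, so the main obstacle is the change-of-variables bookkeeping there. Everything else is linearity of expectation and stationarity.
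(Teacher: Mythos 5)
Your proposal is correct and follows essentially the same route as the paper: integrate the Langevin equation over $[t-\delta,t]$ and $[-\delta,0]$, expand the product $\Delta_t^\delta G\,(\Delta_0^\delta G)^\top$, take expectations, handle the linear-in-$H$ terms by the shift substitutions, and reduce the double integral via $w=s-u$ and an interchange of integration order split at $w=t$, yielding the weights $w-t+\delta$ and $t-w+\delta$. The paper's computation of the second-order term is exactly your triangle decomposition, so there is nothing to add.
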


\begin{rem}
The advantage of the equations above is that we have to consider $\gamma(s)$ only for $s\in[t-\delta, t+\delta]$, but as in the discrete case, for a general stationary $X$ we obtain a symmetric CARE only when $t=0$. In addition, similarly as above, we could set in discrete time $\Delta_t^k G \coloneqq G_t - G_{t-k}, k\in\mathbb{N}$. However, this would lead to more complicated equations in Lemma \ref{lemma:nonriccatidisc}.
\end{rem}
A significant difference compared to the discrete time equations \eqref{noncare} occurs in the univariate case. Namely, the first order term with respect to $H$ vanishes.

\begin{kor}
\label{kor:1Dcont}
The univariate case yields
\begin{footnotesize}
\begin{equation*}
r_\delta(t) = 2\gamma(t) - \gamma(t+\delta) - \gamma(t-\delta) + H^2 
 \left(\int_{t-\delta}^ t (s-t+\delta)\gamma(s) ds + \int_t^{t+\delta} (t-s+\delta) \gamma(s) ds \right)
\end{equation*}
\end{footnotesize}
for every $t\in\mathbb{R}$.
\end{kor}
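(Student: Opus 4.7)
The plan is simply to specialize Lemma~\ref{lemma:nonriccaticont} to the scalar setting $n=1$ and exploit commutativity of real multiplication. In the general identity \eqref{noncarecont}, the zeroth order and second order (in $H$) contributions
\begin{equation*}
2\gamma(t) - \gamma(t+\delta) - \gamma(t-\delta), \qquad H\left(\int_{t-\delta}^t (s-t+\delta)\gamma(s)\,ds + \int_t^{t+\delta}(t-s+\delta)\gamma(s)\,ds\right)H
\end{equation*}
already appear in the claimed formula (the second one becoming an $H^2$ factor times the bracketed integral once $H$ and scalars commute), so the only content of the corollary is that the first order terms in $H$ drop out.

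First, I would set
\begin{equation*}
A := \int_t^{t+\delta}\gamma(s)\,ds - \int_{t-\delta}^{t}\gamma(s)\,ds,
\end{equation*}
so that the two first order terms in \eqref{noncarecont} read $A H + H(-A)$. In the general noncommutative matrix setting, $AH - HA$ need not vanish, but in the univariate case $H\in\mathbb{R}$ and $A\in\mathbb{R}$, and hence $AH - HA = 0$ identically. This cancellation is the entire mechanism.

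Putting these pieces together yields exactly the displayed formula of Corollary~\ref{kor:1Dcont}. There is no genuine obstacle here: the result is an immediate consequence of Lemma~\ref{lemma:nonriccaticont} together with the commutativity of $\mathbb{R}$; the corollary is stated chiefly to highlight the qualitative difference from the discrete time situation \eqref{noncare}, where no such cancellation occurs.
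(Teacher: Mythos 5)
Your proposal is correct and is exactly the argument the paper intends: the corollary follows from Lemma \ref{lemma:nonriccaticont} because the two first order terms form a commutator $AH - HA$ that vanishes when $H$ and the integral are scalars, while the second order term $H(\cdot)H$ becomes $H^2(\cdot)$. The paper gives no separate proof of this corollary, treating it as immediate in precisely this way.
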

One could potentially base a univariate estimation method on the above equations without the concern of existence of a unique positive solution. However, since we wish to treat also multivariate settings, we present the most central results of \cite{voutilainen2019vector} that are obtained from Theorem \ref{theorem:charcont} by considering the noise $G$ directly. First, we define matrix coefficients corresponding to Definition \ref{defi:coefofriccatidisc}. Consequently, we write symmetric CAREs for the parameter $H$ that are similar to the CAREs \eqref{riccatidisc} for the discrete time parameter $\Theta$.

\begin{defi}
\label{defi:coefofriccaticont}
We set 
\begin{align*}
B_t &= \int_0^t  \gamma(s) - \gamma(s)^\top  ds \\
C_t &= \int_0^t \int_0^t \gamma(s-u) du ds \\
D_t &= v(t) - 2\gamma(0) + \gamma(t) + \gamma(t)^\top
\end{align*}
for every $t \geq0$.
\end{defi}

\begin{theorem}
\label{theorem:riccaticont}
Let $H > 0$ be fixed and let $X=(X_t)_{t\in\mathbb{R}}$ be stationary of the form \eqref{stationarysolution}. Then the CARE
\begin{equation}
\label{riccaticont}
B_t^\top H + H B_t - H C_t H + D_t = 0
\end{equation}
 is satisfied for every $t \geq 0$.
\end{theorem}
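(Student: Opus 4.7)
My approach is to start from the integrated form of the Langevin equation \eqref{langevin} and extract the CARE directly from the second-moment identity $v(t)=\e G_tG_t^\top$. Since $G_0=0$, integrating \eqref{langevin} over $[0,t]$ yields
\begin{equation*}
G_t = X_t - X_0 + H\int_0^t X_s\,ds,
\end{equation*}
with the interchange of the constant matrix $H$ and the pathwise integral being immediate.

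I would then compute $\e G_tG_t^\top$ by expanding the square and applying Fubini (justified by square integrability of $X$). Using stationarity in the form $\gamma(-u)=\gamma(u)^\top$ and the symmetry $H^\top=H$ (since $H$ is symmetric positive definite), the four resulting terms evaluate as follows. The quadratic piece $\e[(X_t-X_0)(X_t-X_0)^\top]$ equals $2\gamma(0)-\gamma(t)-\gamma(t)^\top$, and the pure double-integral piece $H\,\e[\int_0^tX_s\,ds\,\int_0^tX_u^\top\,du]\,H$ equals $HC_tH$ directly. For the first cross term, after the change of variables $s=t-u$,
\begin{equation*}
\e\left[(X_t-X_0)\int_0^tX_u^\top\,du\right] = \int_0^t\bigl[\gamma(t-u)-\gamma(-u)\bigr]\,du = \int_0^t\bigl(\gamma(s)-\gamma(s)^\top\bigr)\,ds = B_t,
\end{equation*}
which multiplied on the right by $H^\top=H$ contributes $B_tH$. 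The analogous computation for the other cross term yields $-HB_t$.

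Summing the four contributions gives
\begin{equation*}
v(t) = 2\gamma(0) - \gamma(t) - \gamma(t)^\top + B_tH - HB_t + HC_tH,
\end{equation*}
which is equivalent to $D_t = B_tH - HB_t + HC_tH$. It remains to observe that $B_t$ is antisymmetric by definition, so $B_t^\top = -B_t$ and hence $B_tH = -B_t^\top H$. Substituting this rearranges the identity into $B_t^\top H + HB_t - HC_tH + D_t = 0$, which is \eqref{riccaticont}.

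The main obstacle I anticipate is purely bookkeeping in the multivariate setting: I must ensure that the two cross terms come out with opposite signs so that the antisymmetry $B_t^\top=-B_t$ converts the raw asymmetric pair $B_tH - HB_t$ into the symmetric pair $B_t^\top H + HB_t$ demanded by a genuine symmetric CARE. The stationarity relation $\gamma(-u)=\gamma(u)^\top$, which replaces the scalar-case identity $\gamma(-u)=\gamma(u)$, is precisely what supplies the transposes that make these signs work out.
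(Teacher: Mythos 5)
Your proof is correct and follows essentially the same route the paper uses: integrate the Langevin equation to get $G_t = X_t - X_0 + H\int_0^t X_s\,ds$, expand $v(t)=\e G_tG_t^\top$, and identify the four terms with $D_t$, $B_t$, and $C_t$ (the paper defers the continuous-time proof to its reference but carries out precisely this computation for the discrete analogue, Theorem \ref{theorem:riccatidisc}). Your sign bookkeeping via the antisymmetry $B_t^\top=-B_t$ checks out.
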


\begin{rem}
As in discrete time, the equations \eqref{noncarecont} and \eqref{riccaticont} are covariance based and hence, they hold also for a non-centred stationary $X$.
\end{rem}

\begin{rem}
Contrary to the discrete time equations \eqref{riccatidisc}, the first order term vanishes in the univariate setting as in \eqref{noncarecont}.
\end{rem}
Again, we assume that $t$ is chosen in such a way that $C_t, D_t >0$ ensuring the existence of a unique positive semidefinite solution. We have discussed this assumption in detail in \cite{voutilainen2019vector}. 
We define an estimator $\hat{H}_T$ for the model parameter matrix $H$ identically to the discrete time by replacing the autocovariances $\gamma(\cdot)$ in the matrix coefficients with their estimators $\hat{\gamma}_T(\cdot)$. The below given definition differs slightly from the definition in \cite{voutilainen2019vector}, but the same asymptotic results still apply.

\begin{defi}
\label{defi:estimatorcont}
The estimator $\hat{H}_T$ is defined as the unique positive semidefinite solution to the perturbed CARE
\begin{equation*}
\label{perCARE}
\hat{\Bh}_T^\top\hat{H}_T + \hat{H}_T\hat{\Bh}_T - \hat{H}_T\hat{\Ch}_T\hat{H}_T + \hat{\Dh}_T = 0
\end{equation*}
whenever $\hat{C}_T, \hat{D}_T > 0$. Otherwise, we set $\hat{H}_T = 0$.
\end{defi}
As in discrete time, asymptotic properties of $\hat{H}_T$ are inherited from the autocovariance estimators. However, due to the continuous time setting, instead of pointwise convergence, we have to consider functional form of convergence of $\hat{\gamma}_T(\cdot)$. In \cite{voutilainen2019vector} we have provided sufficient conditions in the case of Gaussian noise $G$ with independent components, under which the assumptions of the following theorems are satisfied. In particular, the results are valid for fractional Brownian motion that is widely applied in the field of mathematical finance.
\begin{theorem}
\label{theo:pertur}
Let $C, D >0$. Assume that 
\begin{equation*}
\label{covarianceinP}
\sup_{s\in[0,t]} \Vert\hat{\gamma}_{T}(s)- \gamma(s)\Vert \overset{\p}{\longrightarrow} 0.
\end{equation*}
Then
\begin{equation*}
\label{solutioninP}
\Vert\hat{\bh}_T - \bh\Vert \overset{\p}{\longrightarrow} 0,
\end{equation*}
where $\hat{\bh}_T$ is given by Definition \ref{defi:estimatorcont}.
\end{theorem}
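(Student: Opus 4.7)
The proof proposal splits naturally into three stages: convergence of the coefficient matrices, handling the event on which $\hat{H}_T$ is defined by the CARE, and invoking a perturbation result for algebraic Riccati equations.

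First I would show that
\begin{equation*}
\Vert \hat{B}_T - B\Vert + \Vert \hat{C}_T - C\Vert + \Vert \hat{D}_T - D\Vert \overset{\p}{\longrightarrow} 0.
\end{equation*}
The coefficients $B_t, C_t$ are integrals of $\gamma$ over the compact region $[0,t]$ (respectively $[0,t]^2$), and $D_t$ is an affine function of $\gamma(0)$ and $\gamma(t)$ with the known term $v(t)$. Uniform convergence $\sup_{s\in[0,t]}\Vert\hat{\gamma}_T(s)-\gamma(s)\Vert \overset{\p}{\to} 0$ passes through these integrals and evaluations: the integral estimates are bounded by $t$ times (resp.\ $t^2$ times) the uniform error, and the $D_t$ estimate is immediate from the triangle inequality. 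Hence each coefficient estimator converges in probability to its target.

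Next I would handle the definitional split. Since $C > 0$ and $D > 0$ strictly, the smallest eigenvalue of each is positive, and by Weyl-type continuity of eigenvalues together with the convergence from the first step, we have $\p(\hat{C}_T > 0, \hat{D}_T > 0) \to 1$. Consequently, outside an event of asymptotically vanishing probability, $\hat{H}_T$ is genuinely the unique positive semidefinite solution of
\begin{equation*}
\hat{B}_T^\top \hat{H}_T + \hat{H}_T \hat{B}_T - \hat{H}_T \hat{C}_T \hat{H}_T + \hat{D}_T = 0,
\end{equation*}
and the 0-branch of the definition contributes nothing to the limit.

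Finally, on this high-probability event I would invoke a perturbation theorem for symmetric CAREs (of the type in Sun's perturbation analysis, cited in the excerpt) to conclude that the map sending $(B,C,D)$ with $C,D>0$ to the unique positive semidefinite solution of the CARE is continuous at $(B,C,D)$. Applying this continuous map to the sequence $(\hat{B}_T, \hat{C}_T, \hat{D}_T)$ and using the continuous mapping theorem with convergence in probability yields $\Vert \hat{H}_T - H\Vert \overset{\p}{\to} 0$. The main obstacle is justifying this last step cleanly: one must ensure that the perturbation result applies uniformly in a deterministic neighbourhood of $(B,C,D)$ so that it can be composed with in-probability convergence, and in particular that the positive definiteness of $C$ and $D$ is preserved along the relevant sequence. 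Both concerns are addressed by working on the intersection of the event $\{\hat{C}_T, \hat{D}_T > 0\}$ with a ball around $(B,C,D)$ in which the perturbation bound is valid, since this intersection has probability tending to one. The overall argument is essentially identical to the one used for the discrete time Theorem \ref{theorem:consistencydisc} and to the corresponding statement in \cite{voutilainen2019vector}, which may be referenced directly.
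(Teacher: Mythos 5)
Your proposal is correct and follows essentially the same route as the paper, which for this continuous-time statement simply defers to \cite{voutilainen2019vector}: bound $\Delta_T B$, $\Delta_T C$, $\Delta_T D$ by the uniform autocovariance error (the analogue of Lemma \ref{lemma:bounds}), note that $\p(\hat{C}_T,\hat{D}_T>0)\to 1$ so the zero branch of Definition \ref{defi:estimatorcont} is asymptotically irrelevant, and apply the CARE perturbation bound of \cite{sun1998perturbation} on that high-probability event. This is also exactly the structure of the paper's proof of the discrete-time counterpart, Theorem \ref{theorem:consistencydisc}.
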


\begin{theorem}
\label{theorem:fclt}
Let $Y = (Y_s)_{s\in[0,t]}$ be an $n^2$-dimensional stochastic process with continuous paths almost surely and let $l(T)$ be a rate function. If
\begin{equation*}
\label{functionalcovariance}
l(T)\vect (\hat{\gamma}_T(s)- \gamma(s)) \overset{\text{law}}{\longrightarrow} Y_s
\end{equation*}
in the uniform topology of continuous functions, then: \begin{itemize}
\item[(1)] Let $\tilde{Y}_s$ be the permutation of elements of $Y_s$ that corresponds to the order of elements of $\vect\left((\boh(s) -\gamma(s))^\top\right)$. Then
\begin{equation*}
l(T) \vect(\Delta_T \Ch, \Delta_T \Bh, \Delta_T \Dh) \overset{\text{law}}{\longrightarrow}  \begin{bmatrix*}
\int_0^ t (t-s)(Y_s + \tilde{Y}_s) ds\\
\int_0^t \left(Y_s -\tilde{Y}_s\right) ds\\
 Y_t +\tilde{Y}_t - 2Y_0
\end{bmatrix*}
\eqqcolon L_1(Y).
\end{equation*}
\item[(2)] If $\Ch, \Dh > 0$ and $\hat{\bh}_T$ is given by Definition \ref{defi:estimatorcont}, then
\begin{equation*}
l(T)  \vect(\hat{\bh}_T - \bh) \overset{\text{law}}{\longrightarrow} L_2(L_1(Y)),
\end{equation*}
where $L_2: \mathbb{R}^{3n^2} \rightarrow \mathbb{R}^{n^2}$ is a linear mapping expressible in terms of $\bh$, $t$ and the covariance function of $G$.
\end{itemize}
\end{theorem}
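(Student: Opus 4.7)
The plan for part (1) is to write each of $\Delta_T B$, $\Delta_T C$, $\Delta_T D$ as an explicit linear functional of the random path $s \mapsto l(T) \vect(\hat{\gamma}_T(s) - \gamma(s))$ that is continuous in the uniform topology, and then invoke the continuous mapping theorem. Differencing Definition \ref{defi:coefofriccaticont} directly gives $\Delta_T B = \int_0^t \bigl((\hat{\gamma}_T(s) - \gamma(s)) - (\hat{\gamma}_T(s) - \gamma(s))^\top\bigr) ds$ and $\Delta_T D = (\hat{\gamma}_T(t) - \gamma(t)) + (\hat{\gamma}_T(t) - \gamma(t))^\top - 2(\hat{\gamma}_T(0) - \gamma(0))$, since the known covariance $v(t)$ does not enter the estimation error. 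For $\Delta_T C$ I would substitute $w = s - u$ in the double integral in $C_t$ and use $\gamma(-w) = \gamma(w)^\top$ to rewrite $C_t = \int_0^t (t-w)\bigl(\gamma(w) + \gamma(w)^\top\bigr) dw$, so that $\Delta_T C = \int_0^t (t-s)\bigl((\hat{\gamma}_T(s) - \gamma(s)) + (\hat{\gamma}_T(s) - \gamma(s))^\top\bigr) ds$. Vectorising and using that $\vect$ of a transpose is a fixed permutation of $\vect$ produces $\tilde{Y}_s$ precisely in the slots claimed, so the continuous mapping theorem applied to these three bounded linear functionals on $C([0,t]; \re^{n^2})$ delivers the stated joint limit $L_1(Y)$.

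For part (2) the plan is a first-order perturbation analysis of \eqref{riccaticont}, combined with part (1) and Slutsky's lemma. Subtracting \eqref{riccaticont} from the perturbed CARE defining $\hat{H}_T$ and writing $\Delta H = \hat{H}_T - H$ yields
\begin{equation*}
(B^\top - HC)\Delta H + \Delta H(B - CH) = H(\Delta_T C) H - H(\Delta_T B) - (\Delta_T B)^\top H - \Delta_T D + R_T,
\end{equation*}
where $R_T$ collects all products of at least two of $\Delta H$, $\Delta_T B$, $\Delta_T C$. Theorem \ref{theo:pertur} gives $\Delta H \overset{\p}{\longrightarrow} 0$ and part (1) gives $\Delta_T B, \Delta_T C, \Delta_T D = O_\p(l(T)^{-1})$; bootstrapping through the Lyapunov operator then yields $\Delta H = O_\p(l(T)^{-1})$ as well, so $l(T) R_T \overset{\p}{\longrightarrow} 0$. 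Vectorising, the left-hand side equals $\bigl(I \otimes (B^\top - HC) + (B - CH)^\top \otimes I\bigr) \vect(\Delta H)$; since $C, D > 0$, the closed-loop matrix $B - CH$ is stable (a standard property of the stabilising solution of the CARE), so this Kronecker sum is invertible. Taking $L_2$ to be the composition of its inverse with the linear map $\vect(\Delta_T C, \Delta_T B, \Delta_T D) \mapsto \vect\bigl(H(\Delta_T C)H - H(\Delta_T B) - (\Delta_T B)^\top H - \Delta_T D\bigr)$ gives a linear operator depending only on $H$, $t$ and the covariance function of $G$; part (1) together with Slutsky then concludes.

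The hard part will be making the linearisation rigorous at the correct rate: establishing the bootstrap bound $\Delta H = O_\p(l(T)^{-1})$ without circularity, and confirming stability of $B - CH$ under $C, D > 0$ so that the Lyapunov operator is boundedly invertible in a neighbourhood of the true coefficient triple. Both points are classical in CARE perturbation theory (cf.\ \cite{sun1998perturbation}) and have been carried out in \cite{voutilainen2019vector} in precisely this setting. The one genuinely new ingredient is the passage from functional weak convergence of $\hat{\gamma}_T$ in the uniform topology to the convergence of the integrals defining $\Delta_T C$ and $\Delta_T B$, which is exactly what the continuous mapping theorem provides under the present hypothesis.
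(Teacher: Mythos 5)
Your argument is correct and takes essentially the same route as the paper: the paper defers the proof of Theorem \ref{theorem:fclt} to \cite{voutilainen2019vector}, but its proof of the discrete analogue (Theorem \ref{theorem:asymptoticsdisc}) uses exactly your part-(1) strategy of rewriting $\Delta_T C$, $\Delta_T B$, $\Delta_T D$ as continuous linear functionals of the autocovariance errors (including the same reduction $C_t=\int_0^t(t-w)\bigl(\gamma(w)+\gamma(w)^\top\bigr)\,dw$) followed by the continuous mapping theorem, and for part (2) likewise invokes the CARE linearisation carried out in \cite{voutilainen2019vector}. The only details worth making explicit are that the autocovariance estimator satisfies $\hat{\gamma}_T(-w)=\hat{\gamma}_T(w)^\top$ (needed for your rewriting of $\Delta_T C$) and that the event on which $\hat{C}_T$ or $\hat{D}_T$ fails to be positive definite, where $\hat{H}_T=0$ by Definition \ref{defi:estimatorcont}, has probability tending to zero and hence does not affect the weak limit.
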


\section{Proofs}
\label{sec:proofs}
In the following, we denote the smallest eigenvalue of $H >0$ by $\lambda_{min}$. Consequently $\Vert e^{kH} \Vert = \Vert Q \mathrm{diag}(e^{\lambda_ik}) Q^\top \Vert = e^{\lambda_{min} k}$ for a negative $k$.
\subsection{Discrete time}
\label{sub:proofsdiscrete}
The proof of the next lemma follows the lines of the proof of Theorem 2.2. in \cite{Viitasaari-2016a} that concerns the one-dimensional continuous time case. However, in our setting, we obtain a weaker sufficient condition for $G \in\mathcal{G}_H$ for all $H > 0$.

\begin{lemma}
\label{lemma:logarithmic}
Let $G = (G_t)_{t\in\mathbb{Z}}$ be a stationary increment process with $G_0=0$. Assume that
\begin{equation*}
\e \left | \log \Vert G_1 \Vert \mathbbm{1}_{\{\Vert G_1 \Vert > 1\}} \right|^{1 + \delta} < \infty
\end{equation*}
for some $\delta >0$. Then $G \in \mathcal{G}_H$ for all $H >0$.
\begin{proof}
Let $H >0$. We apply the Borel-Cantelli lemma together with Markov's inequality to show that $\Vert e^{kH} \Delta_k G \Vert \to 0$ almost surely as $k\to -\infty$. Let $\epsilon > 0$ be fixed below.

\begin{equation*}
\begin{split}
\p \left(\Vert e^{kH} \Delta_k G \Vert > \epsilon\right) &\leq \p \left(e^{\lambda_{min} k} \Vert \Delta_k G \Vert > \epsilon \right) = \p \left(e^{\lambda_{min} k} \Vert G_1 \Vert > \epsilon \right)\\
 &= \p \left( \Vert G_1 \Vert > \frac{\epsilon}{e^{\lambda_{min} k}} \right)= \p \left( \log \Vert G_1 \Vert > \log \epsilon - \lambda_{min} k \right),
\end{split}
\end{equation*}
since $\Delta G$ is stationary and $G_0 =0$. Furthermore, $k(\frac{\log \epsilon}{k} - \lambda_{min}) \geq -Ck$ for some $C >0$ and $k \leq k_\epsilon$. Thus, for $k \leq k_\epsilon$,
\begin{equation*}
\begin{split}
\p \left(\Vert e^{kH} \Delta_k G \Vert > \epsilon\right) &\leq \p\left( \log \Vert G_1 \Vert \geq -Ck\right) = \p\left( \log \Vert G_1 \Vert \mathbbm{1}_{\{\Vert G_1 \Vert > 1\}}  \geq -Ck\right)\\
&\leq \frac{\e \left | \log \Vert G_1 \Vert \mathbbm{1}_{\{\Vert G_1 \Vert > 1\}} \right|^{1 + \delta}}{(-Ck)^{1+\delta}} \leq c \frac{1}{(-k)^{1+\delta}}
\end{split}
\end{equation*}
giving the wanted result. We conclude the proof by noting that
\begin{footnotesize}
\begin{equation*}
\sum_{k=-\infty}^0 \Vert e^{kH} \Delta_k G\Vert \leq \sum_{k=-\infty}^0  \Vert e^{\frac{1}{2} k H} \Vert \Vert e^{\frac{1}{2} k H} \Delta_k G \Vert \leq \sup_{k} \Vert e^{\frac{1}{2} kH} \Delta_k G \Vert \sum_{k=-\infty}^0   e^{\frac{1}{2} \lambda_{min} k}  < \infty
\end{equation*}
\end{footnotesize}
almost surely.
\end{proof}
\end{lemma}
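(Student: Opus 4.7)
The target is to show that the two-sided tail sum $\sum_{k \le 0} e^{kH}\Delta_k G$ converges in the sense required by Definition~\ref{defi:GH}. My plan is to establish the stronger claim that the series converges absolutely almost surely, which automatically implies both convergence in probability and almost sure finiteness of the limit. Concretely, I will show $\sum_{k=-\infty}^0 \Vert e^{kH}\Delta_k G\Vert < \infty$ a.s., a natural strengthening since each summand is a nonnegative random variable.

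The route is a Borel--Cantelli argument on the events $A_k=\{\Vert e^{kH}\Delta_k G\Vert > \epsilon\}$ to obtain $\Vert e^{kH}\Delta_k G\Vert \to 0$ a.s.\ as $k\to-\infty$. Two preparatory observations do most of the work. First, stationarity of the increments combined with $G_0=0$ forces $\Delta_k G \overset{\text{law}}{=} G_1$ for every $k$, so all probabilistic estimates reduce to the single random variable $G_1$. Second, since $H>0$ is symmetric positive definite, the operator norm of $e^{kH}$ for $k\le 0$ equals $e^{\lambda_{\min}k}$, where $\lambda_{\min}>0$ is the smallest eigenvalue of $H$. Consequently,
\begin{equation*}
\p(\Vert e^{kH}\Delta_k G\Vert > \epsilon) \le \p(\Vert G_1\Vert > \epsilon\, e^{-\lambda_{\min}k}) = \p(\log\Vert G_1\Vert > \log\epsilon - \lambda_{\min}k).
\end{equation*}
For $k$ sufficiently negative the right-hand side exceeds any fixed positive constant, so I may replace $\log\Vert G_1\Vert$ by $\log\Vert G_1\Vert\,\mathbbm{1}_{\{\Vert G_1\Vert>1\}}$ without changing the probability. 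Markov's inequality applied to the moment hypothesis of order $1+\delta$ then bounds the probability by a constant multiple of $|k|^{-(1+\delta)}$, which is summable. Borel--Cantelli delivers $\Vert e^{kH}\Delta_k G\Vert \to 0$ a.s.

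To pass from pointwise decay to summability, I will use the splitting $\Vert e^{kH}\Delta_k G\Vert \le \Vert e^{kH/2}\Vert\cdot\Vert e^{kH/2}\Delta_k G\Vert$. The first factor equals $e^{\lambda_{\min}k/2}$ and is geometrically summable over $k\le 0$, while the second factor is bounded in $k$ almost surely: applying the Borel--Cantelli step above with $H$ replaced by $H/2$ shows that $\Vert e^{kH/2}\Delta_k G\Vert$ tends to $0$, hence is a.s.\ bounded by some random constant. Multiplying these yields an a.s.\ finite bound on $\sum_{k\le 0} \Vert e^{kH}\Delta_k G\Vert$, completing the proof.

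The main obstacle I anticipate is calibrating the Borel--Cantelli estimate to extract exactly the exponent $1+\delta$ needed for summability; in particular, one must check carefully that the switch from $\log\Vert G_1\Vert$ to $\log\Vert G_1\Vert\,\mathbbm{1}_{\{\Vert G_1\Vert>1\}}$ is justified uniformly in large $|k|$, and that no contribution is lost when the logarithm is taken. Once that bookkeeping is in place, the remaining steps are routine matrix-norm manipulations enabled by the positive definiteness of $H$.
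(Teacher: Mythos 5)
Your proposal is correct and follows essentially the same route as the paper's proof: the Borel--Cantelli/Markov argument reducing to $G_1$ via stationary increments, the truncation of the logarithm justified by the positivity of $\log\epsilon-\lambda_{\min}k$ for large negative $k$, and the final splitting $\Vert e^{kH}\Delta_kG\Vert\le\Vert e^{kH/2}\Vert\,\Vert e^{kH/2}\Delta_kG\Vert$ combined with geometric summability. No substantive differences.
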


We next extend the concept of self-similarity to discrete time multivariate processes.
\begin{defi}
\label{defi:selfsimilarity}
Let $H > 0$ and let $Y = (Y_{e^t})_{t\in\mathbb{Z}}$ be a $n$-dimensional stochastic process. Then $Y$ is $H$-self-similar if 
\begin{equation*}
(Y_{e^{t+s}})_{t\in\mathbb{Z}} \overset{\text{law}}{=} (e^{sH}Y_{e^t})_{t\in\mathbb{Z}}
\end{equation*}
for every $s\in\mathbb{Z}$.
\end{defi}

The following transform and the corresponding theorem giving one-to-one correspondence between self-similar and stationary processes were originally introduced by Lamperti in the univariate continuous time setting (\cite{Lamperti-1962}).
\begin{defi}
\label{defi:lamberti}
Let $H >0$, and let $X = (X_t)_{t\in\mathbb{Z}}$ and $Y = (Y_{e^t})_{t\in\mathbb{Z}}$ be $n$-dimensional stochastic processes. We define
\begin{equation*}
(\mathcal{L}_H X)_{e^t} = e^{tH} X_t
\end{equation*}
and
\begin{equation*}
(\mathcal{L}_H^{-1} Y)_t  = e^{-tH} Y_{e^t}.
\end{equation*}
\end{defi}

\begin{theorem}
\label{theorem:lamperti}
The operator $\mathcal{L}_H$ together with its inverse $\mathcal{L}_H^ {-1}$ define a bijection between $n$-dimensional stationary processes and $n$-dimensional $H$-self-similar processes.

\begin{proof}
First, let $X$ be stationary and set $Z_{e^t}= (\mathcal{L}_H X)_{e^t}$. Then 
\begin{equation*}
\begin{bmatrix*}
Z_{e^{t_1+s}}\\
Z_{e^{t_2+s}}\\
\vdots\\
Z_{e^{t_m+s}}
\end{bmatrix*} = \begin{bmatrix*}
e^{(t_1+s)H} X_{t_1+s}\\
e^{(t_2+s)H} X_{t_2+s}\\
\vdots\\
e^{(t_m+s)H} X_{t_m+s}
\end{bmatrix*}  \overset{\text{law}}{=} \begin{bmatrix*}
e^{(t_1+s)H} X_{t_1}\\
e^{(t_2+s)H} X_{t_2}\\
\vdots\\
e^{(t_m+s)H} X_{t_m}
\end{bmatrix*} = \begin{bmatrix*}
e^{sH} Z_{e^{t_1}}\\
e^{sH} Z_{e^{t_1}}\\
\vdots\\
e^{sH} Z_{e^{t_m}}
\end{bmatrix*}
\end{equation*}
for every $m\in\mathbb{N}$, $t\in\mathbb{Z}^m$ and $s\in\mathbb{Z}$. Hence, $Z$ is $H$-self-similar.

Now, let $Y$ be $H$-self-similar and set $Z_t = (\mathcal{L}_H^{-1} Y)_t$. Then
\begin{equation*}
\begin{bmatrix*}
Z_{t_1 +s}\\
Z_{t_2 +s}\\
\vdots\\
Z_{t_m +s}
\end{bmatrix*} = \begin{bmatrix*}
e^{-(t_1+s)H} Y_{e^{t_1 + s}}\\
e^{-(t_2+s)H} Y_{e^{t_2 + s}}\\
\vdots\\
e^{-(t_m+s)H} Y_{e^{t_m + s}}
\end{bmatrix*}  \overset{\text{law}}{=} \begin{bmatrix*}
e^{-t_1H}Y_{e^{t_1}}\\
e^{-t_2H}Y_{e^{t_2}}\\
\vdots\\
e^{-t_mH}Y_{e^{t_m}}
\end{bmatrix*} = \begin{bmatrix*}
Z_{t_1}\\
Z_{t_2}\\
\vdots\\
Z_{t_m}
\end{bmatrix*}
\end{equation*}
for every $m\in\mathbb{N}$, $t\in\mathbb{Z}^m$ and $s\in\mathbb{Z}$. Hence, $Z$ is stationary completing the proof.
\end{proof}
\end{theorem}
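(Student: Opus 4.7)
The plan is to prove the theorem by showing three things: (i) $\mathcal{L}_H$ sends stationary processes to $H$-self-similar processes, (ii) $\mathcal{L}_H^{-1}$ sends $H$-self-similar processes to stationary processes, and (iii) the two operators are mutual inverses at the pointwise level, so together they produce a bijection of the two classes.

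For (iii), the verification is purely algebraic and needs no stochastic input: for any process $X$ indexed by $\mathbb{Z}$ one has $(\mathcal{L}_H^{-1}\mathcal{L}_H X)_t = e^{-tH}e^{tH}X_t = X_t$, and symmetrically $(\mathcal{L}_H\mathcal{L}_H^{-1}Y)_{e^t}=Y_{e^t}$, since the deterministic factors $e^{tH}$ and $e^{-tH}$ cancel pointwise. Thus the heart of the proof is parts (i) and (ii).

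For (i), assume $X$ is stationary and set $Z_{e^t}=(\mathcal{L}_H X)_{e^t}=e^{tH}X_t$. Fix $m\in\mathbb{N}$, $t_1,\dots,t_m\in\mathbb{Z}$ and $s\in\mathbb{Z}$. The vector $(Z_{e^{t_i+s}})_{i=1}^m$ is obtained from $(X_{t_i+s})_{i=1}^m$ by applying the deterministic block-diagonal matrix with blocks $e^{(t_i+s)H}$. By stationarity of $X$, the joint law of $(X_{t_i+s})_{i=1}^m$ agrees with that of $(X_{t_i})_{i=1}^m$, and applying the same deterministic linear map to the latter yields $(e^{(t_i+s)H}X_{t_i})_{i=1}^m = (e^{sH}\,e^{t_iH}X_{t_i})_{i=1}^m = (e^{sH}Z_{e^{t_i}})_{i=1}^m$. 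This is exactly the condition of Definition \ref{defi:selfsimilarity}. Part (ii) is the mirror image: for $H$-self-similar $Y$ and $W_t=(\mathcal{L}_H^{-1}Y)_t=e^{-tH}Y_{e^t}$, applying the deterministic blocks $e^{-(t_i+s)H}$ to $(Y_{e^{t_i+s}})_{i=1}^m$ and using $H$-self-similarity to replace $Y_{e^{t_i+s}}$ by $e^{sH}Y_{e^{t_i}}$ in law produces $(e^{-t_iH}Y_{e^{t_i}})_{i=1}^m=(W_{t_i})_{i=1}^m$, so $W$ is stationary.

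There is no real obstacle; the argument reduces to the fact that the finite-dimensional distributional identity defining stationarity (resp.\ self-similarity) is preserved under application of any fixed deterministic linear transformation, together with the trivial cancellation $e^{tH}e^{-tH}=I$ that makes the two operators mutual inverses. The only care needed is to set up the matching between indices $t_i$ on the stationary side and $e^{t_i}$ on the self-similar side consistently; once this bookkeeping is done, each of (i) and (ii) is a single-line computation with block-diagonal matrices, and combining them with (iii) gives the bijection claim.
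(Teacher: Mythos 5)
Your proposal is correct and follows essentially the same route as the paper: both directions are verified on finite-dimensional distributions by applying a fixed deterministic block-diagonal matrix and invoking stationarity (resp.\ $H$-self-similarity), exactly as in the paper's proof. Your explicit check that $\mathcal{L}_H$ and $\mathcal{L}_H^{-1}$ cancel pointwise is a harmless addition that the paper leaves implicit in the definition of the inverse.
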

Before the proof of Theorem \ref{theorem:chardisc} we state an auxiliary lemma.

\begin{lemma}
\label{lemma:YtoG}
Let $H>0$ and let $(Y_{e^t})_{t\in\mathbb{Z}}$ be a $n$-dimensional $H$-self-similar process. We define a process $G = (G_t)_{t\in\mathbb{Z}}$ by

\begin{equation*}
G_t = \left\{
\begin{array}{ll}
\sum_{k=1}^t e^{-kH} \Delta_k Y_{e^k}, \quad &t\geq 1 \\
0, \quad &t=0 \\
-\sum_{k=t+1}^0 e^{-kH} \Delta_k Y_{e^k}, \quad &t\leq -1.
\end{array}
\right.
\end{equation*}
Then $G \in \mathcal{G}_H$.
\begin{proof}
It is straightforward to verify that $\Delta_t G = e^{-tH} \Delta_t Y_{e^t}$ for every $t\in\mathbb{Z}$. In addition
\begin{equation*}
\lim_{l \to -\infty} \sum_{k=l}^ 0 e^{kH} \Delta_k G = \lim_{l \to -\infty} \sum_{k=l}^ 0 \Delta_k Y_{e^ k} = Y_1 - \lim_{l\to -\infty} Y_{e^ {l-1}}, 
\end{equation*}
where by self-similarity of $Y$
\begin{equation*}
\p (\Vert Y_{e^{l-1}}\Vert \geq \epsilon) = \p(\Vert e^{H(l-1)} Y_1 \Vert\geq \epsilon) \leq \p(e^{\lambda_{min}(l-1)} \Vert Y_1\Vert \geq \epsilon) \to 0. 
\end{equation*}
Hence, we set
\begin{equation*}
\lim_{l \to -\infty} \sum_{k=l}^ 0 e^{kH} \Delta_k G = Y_1.
\end{equation*}
\end{proof}
\end{lemma}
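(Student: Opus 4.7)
The plan is to establish $G \in \mathcal{G}_H$ by verifying the three requirements of Definition \ref{defi:GH}: stationary increments, $G_0 = 0$, and the existence of the limit of $\sum_{k=l}^0 e^{kH}\Delta_k G$ as $l\to -\infty$. The key algebraic observation, which I would derive first by examining each of the three cases in the piecewise definition of $G$, is the identity
\begin{equation*}
\Delta_t G = e^{-tH}\Delta_t Y_{e^t}, \qquad t\in\mathbb{Z}.
\end{equation*}
For $t\geq 1$ this is immediate by telescoping the defining sum, for $t\leq -1$ it follows analogously, and the boundary cases $t=0,-1$ need to be checked individually using $e^{-0\cdot H}=I$. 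The fact that $G_0=0$ is built into the definition.

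Next I would deduce stationarity of $\Delta G$ from $H$-self-similarity of $Y$. Fix $s\in\mathbb{Z}$. The shift invariance to prove is
\begin{equation*}
(\Delta_{t+s}G)_{t\in\mathbb{Z}} = (e^{-(t+s)H}(Y_{e^{t+s}}-Y_{e^{t+s-1}}))_{t\in\mathbb{Z}} \overset{\text{law}}{=} (\Delta_t G)_{t\in\mathbb{Z}}.
\end{equation*}
Apply Definition \ref{defi:selfsimilarity} jointly at the times $t$ and $t-1$ for all $t$, which gives $(Y_{e^{t+s}} - Y_{e^{t+s-1}})_{t\in\mathbb{Z}} \overset{\text{law}}{=}(e^{sH}(Y_{e^t}-Y_{e^{t-1}}))_{t\in\mathbb{Z}}$; multiplying by the deterministic matrix $e^{-(t+s)H}$ inside and using $e^{-(t+s)H}e^{sH}=e^{-tH}$ collapses the extra factor and yields the required equality in distribution. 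Hence $G$ has stationary increments.

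For the limit, the identity above makes the sum telescope: $e^{kH}\Delta_k G = \Delta_k Y_{e^k}$, so
\begin{equation*}
\sum_{k=l}^0 e^{kH}\Delta_k G = \sum_{k=l}^0 \Delta_k Y_{e^k} = Y_{e^0} - Y_{e^{l-1}} = Y_1 - Y_{e^{l-1}}.
\end{equation*}
The remaining task, and the only genuinely analytic step, is to show $Y_{e^{l-1}}\overset{\p}{\to}0$ as $l\to -\infty$. I would obtain this again from self-similarity: applying Definition \ref{defi:selfsimilarity} with shift $l-1$ at the single time $t=0$ gives $Y_{e^{l-1}} \overset{\text{law}}{=} e^{(l-1)H}Y_1$, and then
\begin{equation*}
\p(\Vert Y_{e^{l-1}}\Vert > \epsilon) \leq \p(e^{\lambda_{min}(l-1)}\Vert Y_1\Vert > \epsilon) \longrightarrow 0,
\end{equation*}
since $\lambda_{min}>0$, $l-1\to -\infty$, and $Y_1$ is almost surely finite. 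Combining everything, $\sum_{k=l}^0 e^{kH}\Delta_k G \overset{\p}{\to} Y_1$, which is almost surely finite, so $G\in\mathcal{G}_H$. The only place that might look like an obstacle is the application of self-similarity for both the joint stationarity argument and the single-time decay; once one notices that these are immediate consequences of Definition \ref{defi:selfsimilarity}, the proof is routine.
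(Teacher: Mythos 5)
Your proof is correct and follows essentially the same route as the paper: the telescoping identity $\Delta_t G = e^{-tH}\Delta_t Y_{e^t}$, the collapse of $\sum_{k=l}^0 e^{kH}\Delta_k G$ to $Y_1 - Y_{e^{l-1}}$, and the decay of $Y_{e^{l-1}}$ in probability via self-similarity and the bound $\Vert e^{(l-1)H}\Vert = e^{\lambda_{min}(l-1)}$. The only difference is that you also verify explicitly that $G$ has stationary increments, a requirement of Definition \ref{defi:GH} that the paper leaves implicit; your argument for it (pushing the deterministic matrices through the equality of finite-dimensional distributions) is sound.
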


\begin{proof}[Proof of Theorem \ref{theorem:chardisc}]
Assume that $\lim_{t\to -\infty} e^{t \bh} X_t \overset{\p}{=} 0$ and \eqref{characterizationdisc} holds for $G\in\mathcal{G}_\bh$. Then, by using \eqref{characterizationdisc} repeatedly
\begin{footnotesize}
\begin{equation*}
\begin{split}
X_t &= e^{-\bh} X_{t-1} + \Delta_t G = e^{-(n+1)\bh} X_{t-n-1} + \sum_{j=0}^n e^{-j \bh} \Delta_{t-j} G  \\
&=  e^{-(n+1)\bh} X_{t-n-1} + e^{-tH}\sum_{k = t-n}^t e^{k \bh} \Delta_k G =  e^{-t \bh} \left( e^{(t-n-1)\bh} X_{t-n-1}+ \sum_{k = t-n}^t e^{k \bh} \Delta_k G \right)
\end{split}
\end{equation*}
\end{footnotesize}
for every $n\in\mathbb{N}$. Since, as $n\to\infty$, the limit of the sum above is well-defined, and $\lim_{n\to \infty} e^{(t-n-1)\bh}X_{t-n-1} \overset{\p}{=} 0$, we obtain that
\begin{equation}
\label{mainfty}
X_t = e^{-t\bh} \sum_{k = -\infty} ^t e^{k\bh} \Delta_k G.
\end{equation}
Let $m \in\mathbb{N}, t\in\mathbb{Z}^m$ and $s\in\mathbb{Z}$. Then, by stationary increments of $G$, we have
\begin{equation*}
\begin{bmatrix*}
e^{-(t_1+s)H} \sum_{j=-M} ^{t_1} e^{(j+s)H} \Delta_{j+s} G\\
\vdots\\
e^{-(t_m+s)H} \sum_{j=-M} ^{t_m} e^{(j+s)H} \Delta_{j+s} G
\end{bmatrix*} \overset{\text{law}}{=} \begin{bmatrix*}
e^{-t_1H} \sum_{j=-M} ^{t_1} e^{jH} \Delta_j G\\
\vdots\\
e^{-t_mH} \sum_{j=-M} ^{t_m} e^{jH} \Delta_{j} G
\end{bmatrix*}
\end{equation*}
for every $-M < \min\{t_i\}$. Since the random vectors above converge in probability as $M\to\infty$, we obtain that
\begin{footnotesize}
\begin{equation*}
\begin{bmatrix*}
X_{t_1+ s}\\
\vdots\\
X_{t_m+s}
\end{bmatrix*} = \begin{bmatrix*}
e^{-(t_1+s)H} \sum_{j=-\infty} ^{t_1} e^{(j+s)H} \Delta_{j+s} G\\
\vdots\\
e^{-(t_m+s)H} \sum_{j=-\infty} ^{t_m} e^{(j+s)H} \Delta_{j+s} G
\end{bmatrix*} \overset{\text{law}}{=} \begin{bmatrix*}
e^{-t_1H} \sum_{j=-\infty} ^{t_1} e^{jH} \Delta_j G\\
\vdots\\
e^{-t_mH} \sum_{j=-\infty} ^{t_m} e^{jH} \Delta_{j} G
\end{bmatrix*} = \begin{bmatrix*}
X_{t_1}\\
\vdots\\
X_{t_m}
\end{bmatrix*}
\end{equation*}
\end{footnotesize}
and hence, $X$ is stationary.

Next, assume that $X$ is stationary. Then, by Theorem \ref{theorem:lamperti} there exists a $H$-self-similar $Y$ such that
\begin{equation*}
\Delta_t X = e^{-tH} Y_{e^t} - e^{-(t-1)H} Y_{e^{t-1}} = (e^{-H} - I)X_{t-1} + e^{-tH} \Delta_t Y_{e^t}.
\end{equation*}
Defining $G$ as in Lemma \ref{lemma:YtoG} completes the proof of the other direction. 

To prove uniqueness, we use \eqref{mainfty}. Assume that, for $G, \tilde{G} \in \mathcal{G}_H$,
\begin{equation*}
e^{tH} X_t = \sum_{k=-\infty}^t e^{kH} \Delta_k G = \sum_{k=-\infty}^t e^{kH} \Delta_k \tilde{G}
\end{equation*}
for every $t\in\mathbb{Z}$. Then 
\begin{equation*}
e^{tH}X_t - e^{(t-1)H}X_{t-1} = e^{tH} \Delta_t G = e^{tH} \Delta_t \tilde{G}.
\end{equation*}
Since $e^{tH}$ is invertible and both processes start from zero, we conclude that $G = \tilde{G}$.
\end{proof}

\begin{lemma}
\label{lemma:Phi}
The matrix $\Theta = I- e^{-H}$ is positive definite.
\begin{proof}
Let $a$ be a real vector of length $n$, and let $H = Q\Lambda Q^\top$ be an eigendecomposition of $H$. Then
\begin{equation*}
a^\top (I -e^ {-H}) a = \Vert a\Vert^2 - a^\top e^{-H} a,
\end{equation*}
where 
\begin{equation*}
 |a^\top e^{-H} a| \leq  \Vert a\Vert^2 e^{-\lambda_{min}} < \Vert a\Vert^2
\end{equation*}
completing the proof.
\end{proof}
\end{lemma}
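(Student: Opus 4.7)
The plan is to reduce positive definiteness of $\Theta = I - e^{-H}$ to a scalar inequality by working with the quadratic form. For any real vector $a \neq 0$, I would write
\begin{equation*}
a^\top \Theta a = \Vert a \Vert^2 - a^\top e^{-H} a,
\end{equation*}
so that the claim becomes the strict bound $a^\top e^{-H} a < \Vert a \Vert^2$.

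Next I would exploit the eigendecomposition $H = Q\Lambda Q^\top$ available because $H$ is symmetric positive definite by hypothesis. This yields $e^{-H} = Q e^{-\Lambda} Q^\top$, whose eigenvalues $e^{-\lambda_i}$ lie strictly in $(0,1)$ since each $\lambda_i > 0$. Consequently $e^{-H}$ is symmetric with operator norm $\Vert e^{-H} \Vert = e^{-\lambda_{\min}} < 1$, which is exactly the $k = -1$ specialization of the identity $\Vert e^{kH} \Vert = e^{\lambda_{\min} k}$ for negative $k$ recorded at the opening of Section \ref{sec:proofs}. Applying Cauchy--Schwarz (or equivalently the spectral bound) then gives $|a^\top e^{-H} a| \leq \Vert e^{-H} \Vert \, \Vert a \Vert^2 < \Vert a \Vert^2$, which is the required strict inequality.

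There is no substantive obstacle here: the argument is a one-line spectral computation that rests entirely on the fact that $\lambda_{\min} > 0$. The only point worth flagging is that the same operator-norm bound will be reused throughout Section \ref{sec:proofs} (for example when controlling the tails of series of the form $\sum_{k=-\infty}^0 e^{kH} \Delta_k G$), which makes it natural to isolate the positivity of $\Theta$ as a standalone lemma at this stage.
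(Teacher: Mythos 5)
Your argument is correct and is essentially the same as the paper's own proof: both reduce the claim to the quadratic form $a^\top \Theta a = \Vert a\Vert^2 - a^\top e^{-H} a$ and bound $|a^\top e^{-H} a| \leq e^{-\lambda_{min}}\Vert a\Vert^2 < \Vert a\Vert^2$ via the spectral decomposition of $H$. No differences worth noting.
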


\begin{proof}[Proof of Lemma \ref{lemma:nonriccatidisc}]
We have that
\begin{equation*}
\Delta_t G (\Delta_0 G)^\top = (X_t - \Phi X_{t-1})(X_0^\top - X_{-1}^\top \Phi).
\end{equation*}
Taking expectations yields
\begin{equation*}
r(t) = \Phi \gamma(t) \Phi - \gamma(t+1)\Phi - \Phi \gamma(t-1) +\gamma(t).
\end{equation*}
\end{proof}

\begin{proof}[Proof of Theorem \ref{theorem:riccatidisc}]
Let 
\begin{equation*}
\Delta_t X=  - \Theta X_{t-1} + \Delta_t G.
\end{equation*}
Then for $t\in\mathbb{N}$ we have 
\begin{equation*}
G_t = \sum_{k=1}^t \Delta_k G = \sum_{k=1}^ t \Delta_k X + \Theta \sum_{k=1}^ t X_{k-1} = X_t - X_0 + \Theta \sum_{k=1}^ t X_{k-1}.
\end{equation*}
Hence
\begin{tiny}
\begin{equation*}
\mathrm{cov}(G_t) = \mathrm{cov}(X_t-X_0) + \e\left[(X_t-X_0)\sum_{k=1}^t X_{k-1}^\top \right] \Theta + \Theta \e\left[\sum_{k=1}^ t X_{k-1} (X_t-X_0)^\top \right] + \Theta\e\left[ \sum_{k=1}^ t X_{k-1} \left(\sum_{k=1}^ t X_{k-1} \right)^\top \right] \Theta
\end{equation*}
\end{tiny}
giving \eqref{riccatidisc} since
\begin{equation*}
\e\left[(X_t-X_0)\sum_{k=1}^t X_{k-1}^\top \right] = \sum_{k=1}^ t \gamma(t-k+1) - \gamma(-k+1) = \sum_{k=1}^t \gamma(k) - \gamma(k-1)^\top
\end{equation*}
and
\begin{footnotesize}
\begin{equation*}
\mathrm{cov}(X_t -X_0) = \e\left[(X_t-X_0)(X_t-X_0)^ \top\right] = 2\gamma(0) - \gamma(t) - \gamma(-t) = 2\gamma(0) - \gamma(t) - \gamma(t)^\top
\end{equation*}
\end{footnotesize}
\end{proof}

\begin{proof}[Proof of Lemma \ref{lemma:cyclic}]
Assume that $\tilde{\Phi} = e^{-\tilde{H}}$ satisfies \eqref{quadratic1} for every $t\in\mathbb{Z}$ and set
\begin{equation*}
X_t - \tilde{\Phi} X_{t-1} = \Delta_t \tilde{G}, \qquad t\in\mathbb{Z}.
\end{equation*}
Consequently
\begin{equation*}
\tilde{r}(t) =  \gamma(t)\tilde{\Phi}^2 -  ( \gamma(t+1) - \gamma(t-1))\tilde{\Phi} + \gamma(t) = r(t), \qquad t\in\mathbb{Z},
\end{equation*}
where $\tilde{r}(t)$ is the autocovariance function of $(\Delta_t \tilde{G})_{t\in\mathbb{Z}}$.
Now, since $G_0 = \tilde{G}_0 = 0$, we obtain that
\begin{footnotesize}
\begin{equation*}
\mathrm{var}(G_t) = \mathrm{var}\left(\sum_{k=1}^t \Delta_k G\right) = \sum_{k,j=1}^t \mathrm{cov} ( \Delta_k G, \Delta_j G) = \sum_{k,j=1}^t r(k-j) = \mathrm{var}\left(\sum_{k=1}^ t \Delta_k \tilde{G} \right) = \mathrm{var}(\tilde{G}_t)
\end{equation*}
\end{footnotesize}
for all $t\in\mathbb{N}$. Hence, both $\Theta$ and $\tilde{\Theta}$ are solutions to \eqref{quadratic2}. 
\end{proof}
In order to show that $\hat{\Theta}_T$is consistent, we simply need to find suitable bounds for $\Delta_T B, \Delta_T C$ and $\Delta_T D$ in terms of the autocovariance estimators. After that, the same strategy as in \cite{voutilainen2019vector} can be applied.

\begin{lemma}
\label{lemma:bounds}
Set
\begin{equation*}
M_{t,T} = \max_{s\in\{0,1,\hdots,t\}} \Vert \hat{\gamma}_{T}(s) - \gamma(s) \Vert.
\end{equation*}
Then the coefficients of the perturbed CARE satisfy
\begin{align*}
\Vert\Delta_T D \Vert&\leq 4 M_{t,T}\\
\Vert\Delta_T C\Vert &\leq t^2 M_{t,T}\\
\Vert \Delta_T B \Vert &\leq 2t M_{t,T},
\end {align*}
\begin{proof}
First, we recall first that 
\begin{equation*}
\Vert \hat{\gamma}_T(-s) - \gamma(-s) \Vert = \Vert \hat{\gamma}_T(s)^\top - \gamma(s)^\top \Vert = \Vert \hat{\gamma}_T(s) - \gamma(s) \Vert. 
\end{equation*}
Now, since $v(t)$ is known,
\begin{equation*}
\begin{split}
\Vert\Delta_T D \Vert \leq 2 \Vert\boh(0) - \gamma(0)\Vert + \Vert\boh(t) -\gamma(t)  \Vert + \Vert  \boh(t)^ \top- \gamma(t)^\top\Vert \leq 4 M_{t,T}.
\end{split}
\end{equation*}
Moreover
\begin{equation*}
\Vert\Delta_T C\Vert \leq \sum_{k=1}^ t \sum_{j=1}^ t \Vert \hat{\gamma}_T(k-j)- \gamma(k-j)\Vert  \leq t^2 M_{t,T}. 
\end{equation*}
Finally
\begin{equation*}
\begin{split}
 \Vert \Delta_T B\Vert &\leq \sum_{k=1}^t \Vert \boh(k-1) -  \gamma(k-1)\Vert + \Vert \gamma(k)^\top  - \boh(k)^\top \Vert \leq 2t M_{t,T}.
\end{split}
\end{equation*}
\end{proof}
\end{lemma}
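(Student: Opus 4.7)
The plan is to proceed by direct calculation, treating each of the three coefficient matrices separately and in each case expressing $\hat{X}_T - X$ as an explicit linear combination of the autocovariance errors $\hat{\gamma}_T(s) - \gamma(s)$ for $|s| \le t$, then applying the triangle inequality together with the fact that each term of this form has norm at most $M_{t,T}$.

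The key preliminary observation I would record first is that for the spectral (operator-$L^2$) norm we have $\Vert A^\top \Vert = \Vert A \Vert$, and that the autocovariance function of a stationary process satisfies $\gamma(-s) = \gamma(s)^\top$, so that
\begin{equation*}
\Vert \hat{\gamma}_T(-s) - \gamma(-s) \Vert = \Vert (\hat{\gamma}_T(s) - \gamma(s))^\top \Vert = \Vert \hat{\gamma}_T(s) - \gamma(s) \Vert.
\end{equation*}
This lets me freely bound $\Vert \hat{\gamma}_T(s) - \gamma(s) \Vert \le M_{t,T}$ for every $s \in \{-t,\dots,t\}$, which is what the triangle inequality arguments below require.

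For $\Delta_T D$, because $v(t)$ is assumed known it cancels, leaving
\begin{equation*}
\Delta_T D = -2(\hat{\gamma}_T(0) - \gamma(0)) + (\hat{\gamma}_T(t) - \gamma(t)) + (\hat{\gamma}_T(t) - \gamma(t))^\top,
\end{equation*}
to which I apply the triangle inequality and the transpose-invariance of the norm to get the bound $4 M_{t,T}$. For $\Delta_T C$, I write
\begin{equation*}
\Delta_T C = \sum_{k=1}^t \sum_{j=1}^t \bigl(\hat{\gamma}_T(k-j) - \gamma(k-j)\bigr),
\end{equation*}
noting that $|k-j| \le t-1$, so each of the $t^2$ summands has norm at most $M_{t,T}$. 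For $\Delta_T B$, I analogously split
\begin{equation*}
\Delta_T B = \sum_{k=1}^t \bigl(\hat{\gamma}_T(k-1) - \gamma(k-1)\bigr) - \bigl(\hat{\gamma}_T(k) - \gamma(k)\bigr)^\top,
\end{equation*}
and bound each of the $2t$ terms by $M_{t,T}$.

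There is no real obstacle here: the lemma is a bookkeeping exercise whose only subtlety is correctly handling transposes and negative lags, both of which are covered by the preliminary observation above. The reason to state these bounds now is purely to feed them, together with the perturbation theory for CAREs developed in \cite{voutilainen2019vector}, into the proofs of Theorem \ref{theorem:consistencydisc} and part (1) of Theorem \ref{theorem:asymptoticsdisc}.
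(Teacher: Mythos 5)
Your proposal is correct and follows essentially the same route as the paper's own proof: the same preliminary observation that $\Vert \hat{\gamma}_T(-s) - \gamma(-s) \Vert = \Vert \hat{\gamma}_T(s) - \gamma(s) \Vert$ via $\gamma(-s) = \gamma(s)^\top$ and transpose-invariance of the induced norm, followed by the identical term-by-term triangle-inequality bounds for $\Delta_T D$, $\Delta_T C$ and $\Delta_T B$. No discrepancies to report.
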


\begin{proof}[Proof of Theorem \ref{theorem:consistencydisc}]
The result follows by replacing $\sup_{s\in [0,t]} \Vert \boh(s) - \gamma(s) \Vert$ with $M_{t,T}$ in Corollary 3.14 and in the proof of Theorem 2.9 of \cite{voutilainen2019vector}. The details are left to the reader.
\end{proof}

\begin{proof}[Proof of Theorem \ref{theorem:asymptoticsdisc}]
For the first part of the theorem, we notice that
\begin{equation*}
\begin{split}
C_t &= \sum_{k=1}^ t \sum_{j=1}^ t \gamma(k-j) = \sum_{k=1}^t \sum_{l=k-t}^{k-1} \gamma(l) = \sum_{l=1-t}^ {-1} \sum_{k=1}^ {t+l} \gamma(l) + \sum_{l=0}^ {t-1} \sum_{k=l+1}^t \gamma(l)\\
&= \sum_{1-t}^{-1} (t+l) \gamma(l) + \sum_{l=0}^ {t-1} (t-l) \gamma(l) =  \sum_{l=0}^ {t-1} (t-l) \gamma(l) + \sum_{l=1}^ {t-1} (t-l) \gamma(l)^\top,
\end{split}
\end{equation*}
where $\sum_0^{-1}$ and $\sum_1^0$ are interpreted as empty sums. Now we have that
\begin{align*}
\Delta_T C &= \sum_{k=0}^ {t-1} (t-k) (\hat{\gamma}_T(k) - \gamma(k)) + \sum_{k=1}^ {t-1} (t-k)\left(\hat{\gamma}_T(k)^\top - \gamma(k)^\top\right)\\
\Delta_T B &= \sum_{k=1}^t \hat{\gamma}_T(k-1) - \gamma(k-1) - \hat{\gamma}_T(k)^\top + \gamma(k)^\top\\
\Delta_T D & =  2( \gamma(0)-\hat{\gamma}_T(0)) + \hat{\gamma}_T(t) - \gamma(t)   + \hat{\gamma}_T(t)^\top - \gamma(t)^\top 
\end{align*}
and furthermore
\begin{scriptsize}
\begin{equation*}
\begin{split}
l(T) \vect (\Delta_T C, \Delta_T B, \Delta_T D) &= 
l(T)\begin{bmatrix*}
\sum_{k=0}^{t-1} (t-k) \vect (\boh(k) - \gamma(k)) + \sum_{k=1}^{t-1} (t-k) \vect \left((\boh(k)- \gamma(k))^\top \right)\\
\sum_{k=1}^t \vect (\boh(k-1) - \gamma(k-1)) - \vect\left((\boh(k) - \gamma(k))^\top\right)\\
-2\vect(\boh(0) - \gamma(0)) + \vect(\boh(t) - \gamma(t)) + \vect\left((\boh(t)- \gamma(t))^\top \right)
\end{bmatrix*}\\
 &= 
L_1\left( l(T) \begin{bmatrix*} \vect (\hat{\gamma}_T(0) - \gamma(0))\\
\vect (\hat{\gamma}_T(1) - \gamma(1))\\
\vdots\\
\vect (\hat{\gamma}_T(t) - \gamma(t)) \end{bmatrix*} \right) \overset{\text{law}}{\longrightarrow} L_1(Z)
\end{split}
\end{equation*}
\end{scriptsize}
by the continuous mapping theorem. For the second part of the theorem, the proof of the continuous time case of \cite{voutilainen2019vector} can be applied just by replacing $\sup_{s\in [0,t]} \Vert \boh(s) - \gamma(s) \Vert$ with $M_{t,T}$ in the definition of the set $A_T$.
\end{proof}

\subsection{Continuous time}
We only provide the proof of Lemma \ref{lemma:nonriccaticont}, while the other proofs can be found from \cite{voutilainen2019vector}.
\label{sub:proofscontinuous}
\begin{proof}[Proof of Lemma \ref{lemma:nonriccaticont}]
Integrating \eqref{langevin} from $0$ to $t$ gives 
\begin{equation*}
G_t = X_t - X_0 + H\int_0^t X_s ds.
\end{equation*}
Hence
\begin{equation*}
\Delta_t^\delta G (\Delta_0^\delta G)^\top = \left(X_t - X_{t-\delta} + H\int_{t-\delta}^ t X_s ds\right)\left(X_0^\top - X_{-\delta}^\top + \int_{-\delta}^ 0 X_s^\top ds H\right). 
\end{equation*}
Taking expectations yields
\begin{scriptsize}
\begin{equation*}
r_\delta(t) = 2\gamma(t) - \gamma(t+\delta) - \gamma(t-\delta) + \int_{-\delta}^ 0 \gamma(t-s) - \gamma(t-\delta-s) ds H + H \int_{t-\delta}^t \gamma(s) - \gamma(s+\delta) ds + H \int_{t-\delta}^t \int_{-\delta}^0  \gamma(s-u)  duds H,
\end{equation*}
\end{scriptsize}
where the first order terms can be treated with a simple change of variables. For the second order term we obtain that
\begin{footnotesize}
\begin{equation*}
\begin{split}
\int_{t-\delta}^t \int_{-\delta}^0  \gamma(s-u)  duds &= \int_{t-\delta}^t \int_{s}^ {s+\delta} \gamma(x) dx ds = \int_{t-\delta}^t \int_{t-\delta}^ x \gamma(x) ds dx + \int_{t}^ {t+\delta} \int_{x-\delta}^ t \gamma(x) ds dx \\
&= \int_{t-\delta}^t (x-t+\delta) \gamma(x) dx + \int_t^ {t+\delta} (t-x+\delta)\gamma(x) dx.
\end{split}
\end{equation*}
\end{footnotesize}
\end{proof}

\section*{Acknowledgments}
I would like to thank Lauri Viitasaari for his comments and suggestions.

\bibliographystyle{plain}
\bibliography{pipliateekki}
\end{document}